\numberwithin{equation}{section}
\newcommand{\RR}{\mathbb{R}}
\newcommand{\NN}{\mathbb{N}}
\newcommand{\diag}{\mathrm{diag}}
\renewcommand{\Re}{\mathrm{Re}\,}
\newcommand{\CSS}{\bar{X}}
\newcommand{\Cun}{\bar{\un}}
\newcommand{\Cus}{\bar{\us}}
\newcommand{\Cuf}{\bar{\uf}}
\newcommand{\NSS}{\tilde{X}}
\newcommand{\Nun}{\tilde{\un}}
\newcommand{\Nus}{\tilde{\us}}
\newcommand{\Nuf}{\tilde{\uf}}
\newcommand{\un}{u}
\newcommand{\us}{v}
\newcommand{\uf}{w}
\newcommand{\Ds}{D_v}
\newcommand{\Df}{D_w}
\newcommand{\mn}{m_{u}}  
\newcommand{\ms}{m_{v}}  
\newcommand{\mfa}{m_{w}} 
\newcommand{\md}{m_{d}}  
\newcommand{\fn}{f}
\newcommand{\fs}{g}
\newcommand{\ffa}{h}
\def\mf{\mu_1}
\def\ml{\mu_2}
\def\me{\mu_3}
\renewcommand{\L}{{\mathcal{L}}}
\newcommand{\N}{{\mathcal{N}}}
\newcommand{\W}[2]
{{
		W^{#1,#2}_
		{
			\ifthenelse
			{
				\equal{#1}{2}
			}
			{
				\nu
			}
			{
			}
		}(\Omega)
}}
\newcommand{\Lp}[1]
{{
		L^
		{
			\ifthenelse
			{
				\isempty{#1}
			}
			{
				p
			}
			{
				#1
			}
		}(\Omega)
}}
\definecolor{lime}{HTML}{A6CE39}
\DeclareRobustCommand{\orcidicon}{%
	\begin{tikzpicture}
		\draw[lime, fill=lime] (0,0) 
		circle [radius=0.16] 
		node[white] {{\fontfamily{qag}\selectfont \tiny ID}};
		\draw[white, fill=white] (-0.0625,0.095) 
		circle [radius=0.007];
	\end{tikzpicture}
	\hspace{-2mm}
}
\newenvironment{nalign}
{\begin{equation}\begin{aligned}}
		{\end{aligned}\end{equation}\ignorespacesafterend}
\def\be{\begin{equation}}
\def\ee{\end{equation}}
\def\bpm{\begin{pmatrix}}
\def\epm{\end{pmatrix}}
\def\lp{\left(}
\def\rp{\right)}
\def\del{\partial}
\def\grad{\nabla}
\DeclareMathOperator{\tr}{tr}
\newtheorem{theorem}{\bf Theorem}[section]
\newtheorem{corollary}[theorem]{\bf Corollary}
\newtheorem{proposition}[theorem]{\bf Proposition}
\newtheorem{definition}[theorem]{\bf Definition}
\newtheorem{lemma}[theorem]{\bf Lemma}
\newtheorem{remark}[theorem]{\bf Remark}
\newtheorem{example}[theorem]{\bf Example}
\newtheorem{assumption}[theorem]{\bf Assumption}
\title[Multiple diffusion scales and DDI]{Multiple diffusion scales and diffusion-driven instability: Emergence of near- and far-from-equilibrium patterns} 
\author[T. André]{Théo André \href{https://orcid.org/0009-0001-7320-1941}{\orcidicon}}
\address[T. André]{Institute for Mathematics, Heidelberg University, Im Neuenheimer Feld 205, 69120 Heidelberg, Germany\\
\href{https://orcid.org/0009-0001-7320-1941}{orcid.org/0009-0001-7320-1941}}
\email{theo.andre@uni-heidelberg.de}
\author[S. Cygan]{Szymon Cygan \href{https://orcid.org/0000-0002-8601-829X}{\orcidicon}}
\address[S. Cygan]{Institute for Mathematics, Heidelberg University, Im Neuenheimer Feld 205, 69120 Heidelberg, Germany\\
Instytut Matematyczny, Uniwersytet Wroc\l{}awski, pl. Grunwaldzki 2/4, \hbox{50-384} Wroc\l{}aw, Poland \\ 
\href{https://orcid.org/0000-0002-8601-829X}{orcid.org/0000-0002-8601-829X}}
\email{szymon.cygan@uni-heidelberg.de}
\urladdr {http://www.math.uni.wroc.pl/~scygan}
\author[A. Marciniak-Czochra]{Anna Marciniak-Czochra \href{https://orcid.org/0000-0002-5831-6505}{\orcidicon}}
\address[A. Marciniak-Czochra]{Institute for Mathematics and IWR, Heidelberg University, Im Neuenheimer Feld 205, 69120 Heidelberg, Germany\\ 
\href{https://orcid.org/0000-0002-5831-6505}{orcid.org/0000-0002-5831-6505}}
\email{anna.marciniak@iwr.uni-heidelberg.de}
\urladdr{\href{https://biostruct.iwr.uni-heidelberg.de/folder_people/Anna.Marciniak/index.html}{https://biostruct.iwr.uni-heidelberg.de/folder\_people/Anna.Marciniak/index.html}}
\author[F. Münnich]{Finn Münnich \href{https://orcid.org/0009-0007-9384-2002}{\orcidicon}}
\address[F. Münnich]{Institute for Mathematics, Heidelberg University, Im Neuenheimer Feld 205, 69120 Heidelberg, Germany\\
\href{https://orcid.org/0009-0007-9384-2002}{orcid.org/0009-0007-9384-2002}}
\email{finn.muennich@stud.uni-heidelberg.de}
\date{}
\begin{document}

\keywords{Reaction--diffusion equation, pattern formation, diffusion-driven instability, discontinuous steady states}

\begin{abstract}
    This paper investigates pattern formation in reaction--diffusion systems with both diffusive and nondiffusive components, providing necessary and sufficient conditions for diffusion-driven instability (DDI) and establishing the existence of far-from-equilibrium patterns.
    While previous work has linked DDI to instability in the purely nondiffusive subsystem -- thereby destabilizing all regular Turing patterns -- we show that DDI can also arise from subsystems involving nondiffusive and slow-diffusive components using three different spatial scales. This leads to simple sufficient conditions for DDI in systems with arbitrary numbers of components.
    Moreover, we fully classify all possible sources of DDI in the case of two diffusive and one nondiffusive component.
    Further, we prove the existence of far-from-equilibrium patterns exhibiting branch-switching and discontinuities in the nondiffusive components, which cannot occur in classical reaction--diffusion equations. 
    We illustrate our results with a receptor-based model supported by numerical bifurcation analysis and simulations.
    These findings extend the theoretical foundations of pattern formation, demonstrating how coupling between diffusive and nondiffusive dynamics can generate patterns beyond the reach of the classical reaction--diffusion framework.
\end{abstract}

\maketitle

\section{Introduction}\label{sec:intro}




A central question in pattern formation concerns the emergence of spatial structure from spatially homogeneous steady states. One of the primary mechanisms underlying this phenomenon is diffusion-driven instability (DDI), also known as Turing instability \cite{turing1952chemical}. In classical reaction–diffusion systems, DDI is a bifurcation from a stable homogeneous steady state with respect to diffusion parameters, occurring when the effective diffusion ratio exceeds a critical threshold \cite{Nishiura1982}. Near onset, this instability gives rise to Turing patterns, typically small-amplitude, spatially heterogeneous solutions near the homogeneous equilibrium. In contrast, coupling even a single reaction–diffusion equation with non-diffusive dynamics, described by space-dependent ODEs, leads to fundamentally different behaviour. In such systems,  DDI may arise through an autocatalytic mechanism in the non-diffusive subsystem, which both ensures the onset of instability of the spatially constant solution and implies instability of all Turing patterns \cite{Marciniak15,Cygan2022}. As a consequence, observable patterns are necessarily far-from-equilibrium and exhibit singular structures such as spikes \cite{HMC14} or jump discontinuities \cite{Haerting17}.

This raises the question of whether generic reaction-diffusion-ODE systems with multiple diffusion scales also exhibit such degenerate dynamics, or whether the presence of distinct diffusion scales may allow for emergence and stability of Turing patterns and their coexistence with jump-discontinuity patterns.

Systems coupling diffusive and non-diffusive components arise naturally in the mathematical modeling of spatially structured biological \cite{Klika12, Takagi21_2, He19}, chemical, and ecological processes \cite{Iuorio2023, marasco2014vegetation}. 
In many applications, only a subset of the variables diffuses in space, while other components remain spatially localized and evolve according to ordinary differential equations. 
This modeling framework captures interactions between spatially distributed extracellular species and localized variables. 
Typical examples include receptor-based signaling models \cite{Zhang2025}, pattern formation mechanisms in \emph{Hydra} \cite{Marciniak06}, and models of tumor growth and cancer signaling networks \cite{GALLAY2022103387}, where diffusive morphogens or signaling factors interact with intracellular regulatory dynamics confined to individual cells. In some cases reaction--diffusion--ODE models can be  rigorously derived from underlying microscopic descriptions using homogenization techniques \cite{marciniakptashnyk2008, marciniak2012stronghomog}.

Mathematically, such models consist of reaction--diffusion equations governing the evolution of spatially distributed species coupled with ordinary differential equations describing localized variables. 
The coexistence of diffusive and nondiffusive components introduces spatial scale separation that significantly influences the dynamics of the system. 
In contrast to classical reaction--diffusion systems, where pattern formation is typically analyzed through its {\it close-to-equilibrium} emergence mechanism, the interaction with nondiffusive variables may generate qualitatively different spatial structures. 
In particular, these systems may exhibit irregular or singular patterns, including spike solutions \cite{HMC14} or jump discontinuities \cite{Cygan2023, Akagi2024, Koethe20, Guo2025}, which are characteristic of {\it far-from-equilibrium} regimes.

A central question concerns \emph{de novo} pattern formation, namely the emergence of spatial structure from spatially homogeneous steady states. A classical mechanism underlying this phenomenon is diffusion-driven instability (DDI), also known as Turing instability \cite{turing1952chemical}.
DDI is a bifurcation from a stable spatially homogeneous steady state with respect to diffusion parameters, occurring when the effective diffusion ratio exceeds a critical threshold \cite{Nishiura1982}. It may lead to Turing patterns, which typically arise as small-amplitude, spatially heterogeneous perturbations of the homogeneous equilibrium near the onset of DDI.
 
In contrast, for systems coupling non-diffusive dynamics with a single reaction–diffusion equation, DDI may arise for arbitrarily small but non-zero diffusion. The  Reaction–diffusion–ODE systems thus exhibit a richer structure than classical reaction–diffusion models due to the presence of non-diffusive components, which introduce an additional spatial scale. Reaction-diffusion-ODE systems exhibit a richer structure than classical reaction-diffusion models due to the presence of non-diffusive components, which introduce an additional spatial scale.  

However, it has been shown that when only one component diffuses, all classical Turing patterns are unstable and the system cannot sustain regular stable heterogeneous steady states \cite{Cygan2022}. 
Hence, stable Turing patterns require models with at least two diffusive components.

Such models appear in applications and the diffusive components often exhibit what we call 
\textit{spatial scale separation}, meaning that the diffusive components evolve on 
substantially different spatial transport scales. This situation arises naturally in 
ecological models describing the interaction between vegetation biomass, soil water 
and toxic compounds accumulated in the soil \cite{marasco2014vegetation,Veerman2024_1}. In these 
models the transport mechanisms of the involved quantities differ significantly: soil 
water redistributes relatively fast through diffusion in the soil, plant biomass 
spreads much more slowly through vegetation growth and dispersal, while toxic 
compounds produced during plant decomposition act locally and are therefore often 
treated as nondiffusive variables. A model studied in \cite{Veerman2024_1} is given by
\begin{align}
D\,{ S_t} &= -S + B V + H V S , \\
 { V_t} &= \varepsilon^{2}\Delta V + U V^{2} - B V - H V S, \\
{ U_t} &= \Delta U + A(1-U) - U V^{2},
\end{align}
where $S$ denotes the toxic compounds, $V$ vegetation biomass and $U$ soil water. The 
model explicitly separates the spatial transport scales: water diffuses on the 
natural spatial scale, biomass diffuses much more slowly through the factor 
$\varepsilon^{2}$, and the autotoxic compounds are assumed to act locally and therefore 
do not diffuse. Consequently, 
the system naturally exhibits three spatial transport scales: fast diffusion, slow 
diffusion and the absence of spatial transport.


Hair follicle patterning provides another natural example of such mechanisms, where receptor--ligand interactions involving the epidermal receptor Edar and diffusible morphogens such as BMPs, together with short-range mediators such as CTGF, form an activation--inhibition loop \cite{mou2006generation}. A corresponding minimal model, studied in \cite{klika2012influence}, takes the form
\begin{align}
\partial_t E &= F(E,B), \\
\partial_t C &= G(E,C) + D_C \Delta C, \\
\partial_t B &= H(E,C,B) + D_B \Delta B,
\end{align}
where $E$ denotes the nondiffusing receptor activity, $C$ the short-range diffusible component (CTGF), and $B$ the long-range diffusible inhibitor (BMP). The structural feature $D_C < D_B$ encodes the separation between local receptor dynamics, short-range transport, and long-range inhibition.

Another motivation for considering spatial scale separation comes from models with 
state-dependent diffusion. In the reduced 
Gatenby--Gawlinski model for tumor invasion studied \textit{e.g.} in~\cite{GALLAY2022103387} the spatial dynamics is governed by the system
\begin{align}
 U_t &= U\big( f(U) - dW \big), \\
 V_t &= \partial_x \big( f(U)\,\partial_x V \big) + rV f(V), \\
 W_t &= a\,\partial_x^{2} W + b(V - W),
\end{align}
where $U, V, W$ denote the density of healthy tissue, tumor cells and concentration of lactic acid, respectively. In this model the mobility coefficient depends on the local density of healthy tissue. More precisely, with $f(U)=1-U$, the effective mobility of the tumor population is reduced in regions where the healthy tissue remains close to its carrying capacity. Thus, although this mechanism is qualitatively different from diffusion with a small constant coefficient, it indicates that state-dependent diffusion may lead, in relevant regimes, to substantially weaker spatial spreading of one component than of another. From the modeling viewpoint, this provides a natural motivation for studying systems in which different components evolve on distinct spatial transport scales.

The first goal of this work is to understand how DDI manifests itself in reaction--diffusion--ODE systems with this three-scale structure. In particular, we derive simple and easily verifiable sufficient conditions for DDI in general systems with an arbitrary number of components. While the classical autocatalysis mechanism for the nondiffusive subsystem can still induce instability, it simultaneously destabilizes all nearby Turing patterns. This observation motivates the search for alternative mechanisms that can trigger DDI while still allowing for the existence of stable spatially heterogeneous steady states. Our analysis identifies such mechanisms and clarifies how interactions between nondiffusive, slow-diffusive, and fast-diffusive components can generate DDI compatible with Turing pattern formation.

In the multiple diffusive components setting, DDI may give rise to Turing patterns \cite{marasco2014vegetation,Lopez2025,Smith}. On the other hand, Turing patterns may also be unstable, and solutions may converge to far-from-equilibrium structures instead \cite{Marciniak06}. In particular, discontinuous stationary solutions exhibiting jump-type behavior may arise. Such patterns are analogous to those observed in reaction--diffusion--ODE systems with a single diffusive component, where far-from-equilibrium structures provide the dominant mechanism of spatial organization. In~\cite{Cygan2023}, a construction of discontinuous steady states based on a Banach fixed point argument was provided for general reaction--diffusion--ODE systems with a single diffusive component. This construction can be extended, with minor modifications, to systems involving multiple diffusive components. This forms the second goal of the present work. For the sake of completeness, we adapt this argument to our general framework and present it in a form suitable for the multi-scale setting considered in this paper.

The paper is organized as follows. Section \ref{sec:general_theory} presents sufficient conditions for DDI in general reaction--diffusion--ODE systems based on the decomposition into nondiffusive, slow-diffusive, and fast-diffusive components. Section \ref{sec:special_case} analyzes a three-component system in detail, where necessary conditions for DDI are derived and an example is provided in which instability is generated purely by the subsystem of diffusive species. Section \ref{sec:existence_jump} establishes the existence of far-from-equilibrium jump patterns. Finally, Section \ref{sec:Application} illustrates the theory using a receptor-based model and discusses implications for biological pattern formation, including morphogenesis in \emph{Hydra}.

\subsection{Mathematical setting}\label{sec:setting}
We consider a coupled reaction--diffusion--ODE system with $m = \mn + \ms + \mfa$ components of the form
\begin{align}\label{eq:full_system}
    \left\{
    \begin{aligned}
        \partial_t \un &= \fn(\un,\us,\uf),\\
        \partial_t \us &= \Ds \Delta \us + \fs(\un,\us,\uf),\\
        \partial_t \uf &= \Df \Delta \uf + \ffa(\un,\us,\uf)
    \end{aligned}
    \right.
\end{align}
posed in a bounded domain $\Omega\subset\RR^n$ with boundary $\partial\Omega\in C^2$.
Here, $\un$ denotes the $\mn$ nondiffusing components, $\us$ the $\ms$ slow-diffusing components, and $\uf$ the $\mfa$ fast-diffusing components. The diffusion matrices $\Ds$ and $\Df$ are diagonal with strictly positive entries, while the nondiffusing components have vanishing diffusion. Homogeneous Neumann boundary conditions are imposed on all diffusive species.

Let $F=(\fn,\fs,\ffa)$ denote the reaction terms, assumed to be at least $C^2$; weaker regularity suffices for our main results \cite{KMMü23}. The system can be linearized around a steady state $\NSS=(\Nun,\Nus,\Nuf)$ as
\begin{align}\label{eq:Model_lin_around_steady_state}
    \partial_t \xi &= \L \xi + \N(\xi), \\ \xi(0) &= \xi^0,
\end{align}
where $D:=\diag(0,\Ds,\Df)$ is the diffusion matrix, $J := \nabla_{(\un, \us, \uf)}F(\NSS)$ is the Jacobian at the steady state, $\xi$ denotes the perturbation, $\L$ the linearized operator given by
\[
\L \xi = D \Delta \xi + J \xi,
\]
and $\N(\xi)$ collects higher-order terms.

A constant steady state $\CSS=(\Cun,\Cus,\Cuf)$ undergoes diffusion-driven instability (DDI) if it is stable in the corresponding ODE system ($\Ds=\Df=0$) but loses stability when diffusion is introduced.
For reaction--diffusion--ODE models, nonlinear (in)stability of a bounded steady state follows directly from its linear (in)stability \cite[Theorems 3.3 and 3.7]{KMMü23}.
Linear stability reduces to the spectral analysis of the operator $\L=D\Delta+J$,
with spectral bound 
$$s(\L) = \sup_{\lambda\in\sigma(\L)} \Re \lambda.$$
For a bounded steady state $\NSS$, the spectrum of $\mathcal{L}$ on Banach space $L^p(\Omega)^{\mn+\ms+\mfa}$ or $L^\infty(\Omega)^{\mn}\times L^p(\Omega)^{\ms+\mfa}$ can be decomposed as
\begin{align}\label{eq:Spectrum_of_general_L}
    \sigma(\L) = \sigma\big(\nabla_{\un} \fn (\NSS)\big) \cup \sigma_p(\L),
\end{align}
under the assumption $\Ds, \Df >0$ \cite[Section 5.22]{Kowall_2}, \cite[Proposition 2.3]{KMMü23}.
In the case of a constant steady state $\CSS$ and $p=2$, the point spectrum is expressed in terms of the eigenvalues $(\lambda_j)_{j\ge0}$ of the Laplace operator $-\Delta$ with Neumann boundary conditions:
\begin{align}\label{eq:eigenvalues_at_constant_steady_state}
    \sigma_p(\L) = \bigcup_{j=0}^\infty \sigma(-\lambda_j D + J) = \bigcup_{j=0}^\infty \big\{\lambda\in\mathbb{C} \mid \det(\lambda I + \lambda_j D - J) = 0\big\}.
\end{align}
For later reference, we define the following submatrices of $J = \nabla_{(\un, \us, \uf)}F(\CSS)$:
\begin{equation}
\begin{aligned}
\label{eq:definition_submatrices}
    J_1 := \nabla_{\un} &\fn(\CSS)\in\mathbb{R}^{\mn\times\mn},\ J_2 := \nabla_{\us} \fs(\CSS)\in\mathbb{R}^{\ms\times\ms},\ J_3 := \nabla_{\uf} \ffa(\CSS)\in\mathbb{R}^{\mfa\times\mfa},\\
    J_{12} &:= \nabla_{(\un, \us)} (\fn,\fs)(\CSS) = \begin{pmatrix}
        \nabla_{\un} \fn(\CSS) & \nabla_{\us} \fn(\CSS) \\
        \nabla_{\un} \fs(\CSS) & \nabla_{\us} \fs(\CSS)
    \end{pmatrix} \in\mathbb{R}^{(\mn + \ms)\times(\mn + \ms)},\\
    J_{13} &:= \nabla_{(\un, \uf)} (\fn,\ffa)(\CSS) = \begin{pmatrix}
        \nabla_{\un} \fn(\CSS) & \nabla_{\uf} \fn(\CSS) \\
        \nabla_{\un} \ffa(\CSS) & \nabla_{\uf} \ffa(\CSS)
    \end{pmatrix} \in\mathbb{R}^{(\mn + \mfa)\times(\mn + \mfa)}, \\
    J_{23} &:= \nabla_{(\us, \uf)} (\fs,\ffa)(\CSS) = \begin{pmatrix}
        \nabla_{\us} \fs(\CSS) & \nabla_{\uf} \fs(\CSS) \\
        \nabla_{\us} \ffa(\CSS) & \nabla_{\uf} \ffa(\CSS)
    \end{pmatrix} \in\mathbb{R}^{(\ms + \mfa)\times(\ms + \mfa)}.
\end{aligned}
\end{equation}

\section{Conditions for diffusion-driven instability}\label{sec:general_theory}

In this section we derive sufficient conditions for DDI in the general setting \eqref{eq:full_system}. {Several mechanisms may lead to DDI, and in general the corresponding conditions on the structure of nonlinearities may be rather involved. The simplest situations arise when instability is already present in particular subsystems of the Jacobian matrix. Although this approach provides transparent sufficient criteria, it does not cover all possible scenarios; more subtle mechanisms may occur in higher-dimensional systems, see Remark~\ref{rem:DimN}}.

\subsection{General conditions for DDI}\label{sec:DDI_general_conditions}
We begin with a generalization of DDI condition based on instability of the non-diffusive subsystem previously established for systems with a single diffusive component  \cite{Cygan2022}. For systems with several diffusing components, an analogous condition follows from the spectral characterization \eqref{eq:Spectrum_of_general_L}.

\begin{proposition}[DDI inducued by the non-diffusive subsystem]\label{prop:DDI_autocatalysis}
Let $\CSS$ be a constant steady state satisfying $s(J) < 0$ and $s(J_1) > 0$. 
Then~$\CSS$ exhibits DDI for all diffusion matrices $\Ds, \Df > 0$. 
Furthermore, any continuous nonconstant stationary solution of system~\eqref{eq:full_system} intersecting $\CSS$ is unstable.
\end{proposition}

\begin{proof}
Since $s(J) < 0$, the steady state~$\CSS$ is stable with respect to spatially homogeneous perturbations. 
On the other hand, by~\eqref{eq:Spectrum_of_general_L} we have
$
s(\L) \geqslant s(J_1) > 0
$
for all $\Ds, \Df > 0$ and $\CSS$ is unstable with respect to heterogeneous perturbations.

Moreover if $\NSS$ intersects $\CSS$ at some point $x_0 \in \Omega$, then
\[
s(\tilde{\L}) \geqslant s(\nabla_{\un}\fn(\NSS)) \geqslant s(\nabla_{\un}\fn(\CSS)) = S(J_1) > 0,
\]
which shows instability of $\NSS$.
The above estimate of the spectrum of the multiplication operator is based on the spectral characterization
\[
\sigma\left( \nabla_{\un}\fn(\NSS)\right) = \bigcup_{x\in\Omega\setminus N} \sigma \left( \nabla_{\un}\fn(\NSS(x))\right),
\]
where $N\subset X$ is of zero-measure \cite{KMMü23} and the fact that $\NSS$ is continuous.
\end{proof}

\begin{remark}
The result does not require the presence of all three component types: nondiffusive, slow-diffusive, and fast-diffusive in the system \eqref{eq:full_system}. 
For Proposition~\ref{prop:DDI_autocatalysis}, it suffices to have at least one nondiffusive component and at least one diffusive component, i.e., $\mn \geq 1$ and $\ms + \mfa \geq 1$.
\end{remark}

The autocatalysis mechanism enforces DDI independently of the diffusion coefficients, but it additionally rules out the stability of Turing patterns (Proposition \ref{prop:DDI_autocatalysis}).
Hence, we are interested in a different mechanisms leading to DDI and possible Turing patterns.
Our goal is to identify alternative conditions that still induce DDI while remaining simple to verify in applications.
To this end, we adopt a three-scale framework distinguishing nondiffusive, slow-diffusive, and fast-diffusive components.
Within this setting, DDI may originate from intermediate subsystems rather than from the purely nondiffusive block.
We therefore consider the case where instability arises from the coupled block $J_{12}$ involving both the $\mn$ nondiffusive and the $\ms$ slow-diffusive variables rather than from the nondiffusive block $J_1$.
Writing the linearized operator $\L$ explicitly depending on the diffusion coefficients as $\L_{\Ds,\Df}$, we obtain:

\begin{theorem}\label{thm:transfer_DDI_positive_D1}
    If $s(\L_{0,\hat{D}})>0$ for some fast diffusion matrix $\hat{D}>0$, then there exists $\varepsilon>0$ such that $s(\L_{\Ds,\hat{D}})>0$ for all slow diffusion matrices $0\leqslant \Ds<\varepsilon$.
\end{theorem}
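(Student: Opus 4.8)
The plan is to localise the instability at a single finite Neumann mode and then perturb in $\Ds$ with that mode held fixed. Recall from \eqref{eq:eigenvalues_at_constant_steady_state} that the point spectrum of $\L_{\Ds,\hat D}$ is $\bigcup_{j\ge 0}\sigma\big(J-\lambda_j\diag(0,\Ds,\hat D)\big)$. At $\Ds=0$ the diffusion matrix is $\diag(0,0,\hat D)$, so the nondiffusive block consists of $(\Nun,\Nus)$ and the essential spectrum is $\sigma(J_{12})$; moreover the mode matrices $J-\lambda_j\diag(0,0,\hat D)$ have a block structure in which the $\uf$-block $J_3-\lambda_j\hat D$ is sent to $-\infty$ as $j\to\infty$, so by the corresponding Schur-complement limit their eigenvalues accumulate precisely on $\sigma(J_{12})$.

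First I would use $s(\L_{0,\hat D})>0$ to exhibit a spectral point $\lambda_\ast$ with $\Re\lambda_\ast>0$ and show it is carried by a finite mode. If $\lambda_\ast$ lies in the point spectrum, then $\lambda_\ast\in\sigma\big(J-\lambda_{j_0}\diag(0,0,\hat D)\big)$ for some finite $j_0$ already. If instead $\lambda_\ast\in\sigma(J_{12})$ belongs to the essential spectrum, then, since the mode-wise eigenvalues accumulate exactly on $\sigma(J_{12})$, some large but finite index $j_0$ carries an eigenvalue of positive real part. In either case there is a fixed finite $j_0$ with $s\big(J-\lambda_{j_0}\diag(0,0,\hat D)\big)>0$.

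Next I would freeze $j_0$ and perturb in the slow-diffusion matrix. The matrix $J-\lambda_{j_0}\diag(0,\Ds,\hat D)$ depends affinely, hence continuously, on the diagonal entries of $\Ds$, and $\lambda_{j_0}$ is a fixed finite number, so the perturbation $\lambda_{j_0}\diag(0,\Ds,0)$ tends to $0$ as $\Ds\to0$. Because the presence of an eigenvalue with positive real part is an open condition under continuous dependence of the spectrum of a matrix on its entries, there exists $\varepsilon>0$ such that $J-\lambda_{j_0}\diag(0,\Ds,\hat D)$ still possesses an eigenvalue of positive real part for all $0\le\Ds<\varepsilon$. This eigenvalue belongs to $\sigma_p(\L_{\Ds,\hat D})$ by \eqref{eq:eigenvalues_at_constant_steady_state}, and therefore $s(\L_{\Ds,\hat D})>0$ on that range.

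The step I expect to be the main obstacle is the passage through $\Ds=0$, where the essential spectrum is discontinuous: at $\Ds=0$ the nondiffusive block is $(\Nun,\Nus)$ with essential spectrum $\sigma(J_{12})$, whereas for every $\Ds>0$ it jumps to $\sigma(J_1)$, which may well be stable. Consequently one cannot transfer the instability by tracking the essential spectrum, and a naive continuity argument on $\L_{\Ds,\hat D}$ fails. The resolution is exactly to push the instability onto the point spectrum at the finite mode $j_0$: for fixed $j_0$ the dependence on $\Ds$ is continuous, and the singular limit $\lambda_j\Ds\to\infty$ responsible for the essential-spectrum jump is never entered because $j_0$ is chosen before $\Ds$ is sent to zero.
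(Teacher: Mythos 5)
Your proposal is correct and follows essentially the same route as the paper's own proof: both arguments first localize the instability at a single finite Neumann mode $j_0$ (handling the case where the only unstable spectral point $\lambda_\ast$ lies in the essential spectrum $\sigma(J_{12})$ by the fact that mode-wise eigenvalues of $J-\lambda_j\diag(0,0,\hat D)$ accumulate on $\sigma(J_{12})$, which the paper cites from Klika et al.\ where you sketch it via a Schur-complement limit), and then use continuity of the roots of $\det(\lambda I+\lambda_{j_0}D-J)$ in the entries of $\Ds$ to retain a root with positive real part for all $0\leqslant \Ds<\varepsilon$. Your closing remark about the essential-spectrum jump from $\sigma(J_{12})$ to $\sigma(J_1)$ at $\Ds>0$ correctly identifies why the instability must be pushed onto the point spectrum before perturbing, which is exactly the structure of the paper's argument.
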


\begin{proof}
    Consider fixed diffusion coefficients $\hat{D} > 0$ such that $s(\L_{0,\hat{D}}) > 0$.
    First, we show that this implies the existence of an eigenvalue of $\L_{0,\hat{D}}$ with strictly positive real part.
    If such an eigenvalue exists directly, we proceed with the continuity argument.
    Suppose there exists no eigenvalue with positive real part.
    Hence, by~\eqref{eq:Spectrum_of_general_L}, there exists a spectral element $\lambda$ with $\Re \lambda > 0$, which belongs to $\sigma(J_{12})$.
    Then, by \cite[Section 2.1.2]{Klika12}, there exists a sequence of eigenvalues $\mu_j \in \sigma(J - \lambda_j D)$ such that $\mu_j \to \lambda$ as $j \to \infty$, with $\Re \mu_j > 0$ for sufficiently large $j$.
    Since $\mu_j$ is also an eigenvalue of $\L_{0,\hat{D}}$ for all $j\in\NN$ by \eqref{eq:eigenvalues_at_constant_steady_state}, this yields a contradiction.
    
    Next, we use the continuity of eigenvalues with respect to diffusion parameters.
    The point spectrum of $\L_{\Ds, \Df}$ is characterized by roots of the characteristic polynomial
    \begin{align}
        P(\lambda; \lambda_j, \Ds, \Df) = \det(\lambda I + \lambda_j D - J),
    \end{align}
    see \eqref{eq:eigenvalues_at_constant_steady_state}.
    Since polynomial roots depend continuously on their coefficients \cite{CuckerCorbalan}, eigenvalues vary continuously with $\Ds$ and $\Df$.
    Because there exists $k \in \mathbb{N}_0$ such that $P(\lambda; \lambda_k, 0, \hat{D})$ has a root with positive real part, 
    we can choose $\varepsilon > 0$ small enough so that for all $0 \leqslant \Ds < \varepsilon$, $P(\lambda; \lambda_j, \Ds, \hat{D})$ retains a root with positive real part.
\end{proof}

\begin{remark}[DDI induced by a mixed subsystem]
    Sufficient conditions for DDI follow directly from Theorem~\ref{thm:transfer_DDI_positive_D1}. In particular, if the Jacobian matrix $J$ is stable while the principal submatrix $J_{12}$ is unstable, then the constant steady state $\CSS$ exhibits DDI for any fixed $\Df>0$ and sufficiently small $\Ds \geqslant 0$. This shows that instability of the mixed subsystem $J_{12}$ alone is sufficient to induce DDI without destabilizing Turing patterns.

    Note that this result also applies to classical reaction--diffusion systems, where all components diffuse. The minimal assumptions are $\ms \ge 1$ and $\mfa \ge 1$.
\end{remark}

\begin{remark}\label{rem:s(L)>0}
    Combining the results of Theorem~\ref{thm:transfer_DDI_positive_D1} and equation~\eqref{eq:eigenvalues_at_constant_steady_state}, we conclude that
    \begin{align}
        s(\L) > 0 \quad \Longleftrightarrow \quad \exists\, j \in \mathbb{N}_0 : \; s(-\lambda_j D + J) > 0.
    \end{align}
\end{remark}

Finally, destabilization may also originate from other subsystems. While the previous results identify $J_1$ and $J_{12}$ as natural candidates for
inducing DDI, interactions within the block
$J_{23}$, corresponding to slow and fast diffusion, may likewise generate
instability for suitable diffusion rates (see
Example~\ref{ex:DDI_from_PDE_subsystem}). To better understand the different mechanisms, it is useful
to characterize situations in which DDI is
strictly excluded. According to the spectral decomposition in
equations \eqref{eq:Spectrum_of_general_L} and
\eqref{eq:eigenvalues_at_constant_steady_state}, diffusion cannot
destabilize the steady state if the following two conditions hold
simultaneously:
\begin{enumerate}[label=(\roman*),noitemsep,topsep=0pt]
    \item the nondiffusive submatrix $J_1$ is stable,
    \item the matrix $J-D$ remains stable for every diagonal matrix $D \geqslant 0$.
\end{enumerate}
Matrices satisfying the second condition are referred to as
\emph{strongly stable}. Using the concept of Volterra--Lyapunov
stability for matrices \cite{Cross}, one can formulate explicit criteria ensuring that
conditions (i) and (ii) hold, and therefore exclude DDI.

\begin{theorem}[Volterra--Lyapunov stability criterion]\label{thm:NoDDI}
    If $J$ is Volterra--Lyapunov stable, i.e.\ there exists a diagonal matrix $M>0$ such that $JM+MJ^T$ is a negative definite matrix, then the constant steady state $\CSS$ is stable for all diffusion coefficients~$\Ds, \Df$.
\end{theorem}

\begin{proof}
    The matrix $J$ satisfying $JM + MJ^T<0$ for some diagonal matrix $M>0$ is strongly stable and all submatrices of $J$ are semistable~\cite[Proposition~1, Theorem~1]{Cross}. By \eqref{eq:Spectrum_of_general_L} and \eqref{eq:eigenvalues_at_constant_steady_state}, we have $s(\L)\leqslant 0$ for all diffusion coefficients $\Ds, \Df$.
\end{proof}

Let us sumarize the results of this section. The constant steady state $\CSS$ exhibits DDI if $s(J)<0$ and one of the following conditions holds:
\begin{equation}
\begin{aligned}
\label{eq:TwoConditionsDDI}
(i) &\quad J_1>0, && D_{\us} \text{ arbitrary,} && D_{\uf} \text{ arbitrary,} \\
(ii) &\quad s(J_{12})>0, && D_{\us} \text{ sufficiently small,} && D_{\uf} \text{ fixed}, \\
(iii) &\quad s(J_{13})>0, && D_{\us} \text{ fixed,} && D_{\uf} \text{ sufficiently small.}
\end{aligned}
\end{equation}
For completeness, we also include the case $s(J_{13})>0$, which
corresponds to the mirrored situation obtained by interchanging the
roles of the slow- and fast-diffusive components $v$ and $w$, although
this case does not fall directly within our framework.
In the case $(i)$, all Turing patterns are unstable, whereas in case $(ii)$ and $(iii)$ it is possible to observe Turing patterns.

\begin{remark}\label{rem:DimN}
For systems of dimension \(m \leqslant 3\), strong stability is equivalent to
the stability of \(J\) and all its principal submatrices
\cite[Theorems~3--4]{Cross}. Hence DDI can occur only if at least one
principal submatrix of \(J\) is unstable. For \(m>3\), this equivalence
fails: stability of all principal submatrices does not imply strong
stability \cite[Example~3.1]{Satnoianu}, and DDI may arise even when
every lower-dimensional subsystem is stable.
\end{remark}

Remark \ref{rem:DimN} shows that, although $J_1$ and $J_{12}$ provide
transparent mechanisms for DDI,
destabilization in reaction--diffusion--ODE systems cannot in general
be reduced to low-dimensional subsystems. In particular, for systems
with $m>4$, a complete characterization of all mechanisms leading to
DDI becomes complicated. For $m=3$, however, the situation can be analyzed in greater detail.
In this case one can systematically investigate mechanisms inducing
DDI beyond the instability of $J_1$ and $J_{12}$. Accordingly, we classify all possible sources of
DDI in the minimal three-component setting
consisting of one nondiffusive, one slow-diffusive, and one
fast-diffusive variable.

\subsection{A minimal system coupling nondiffusive, slow-diffusive and fast-diffusive components}\label{sec:special_case}

To fully characterize which subsystems can induce DDI, we employ the Routh--Hurwitz criterion. For three-component systems, the stability conditions can be expressed explicitly in this
framework. Following \cite{Sakamoto12}, let
\begin{align}
    P(\lambda) = \lambda^3 + p_1 \lambda^2 + p_2 \lambda + p_3
\end{align}
be the characteristic polynomial of $J$, with
\begin{align}\label{eq:p1_p2_p3}
p_1 &= -\tr J, &
p_2 &= \det J_{12} + \det J_{23} + \det J_{13}, &
p_3 &= -\det J.
\end{align}
The Routh--Hurwitz criterion establishes that $J$ is stable (i.e., all roots of $P(\lambda)$ lie in the left half-plane) if and only if
\begin{align*}
    p_1 > 0, \quad p_3 > 0, \quad p_1p_2 - p_3 > 0,
\end{align*}
which also ensures $p_2 > 0$. We assume stability of $J$, a necessary condition for DDI.

DDI requires $\CSS$ to be unstable under spatially heterogeneous perturbations, i.e., $s(\L) > 0$. 
By Remark \ref{rem:s(L)>0}, this is equivalent to the existence of $j\in \mathbb{N}_0$ such that $s(- \lambda_j D + J) > 0$, where $\lambda_j$ denotes $j$-th eigenvalue of Laplacian $-\Delta$. 

We consider the characteristic polynomial of $-\mu D + J$ for $\mu\geqslant 0$, i.e.,
\begin{align}\label{eq:CharactPoly_-muD+J}
    P(\lambda; \mu) = \det(\lambda I + \mu D - J) = \lambda^3 + p_1(\mu) \lambda^2 + p_2(\mu) \lambda + p_3(\mu),
\end{align}
with coefficients
\begin{align*}
    p_1(\mu) &= \tr(\mu D - J),\\
    p_2(\mu) &= \det(J_{12} - \mu D_{12}) + \det(J_{23} - \mu D_{12}) + \det(J_{13} - \mu D_{13}),\\
    p_3(\mu) &= \det(\mu D - J).
\end{align*}
The assumption that $J$ is stable implies $p_1(\mu) > 0$ for all $\mu \geqslant 0$ and hence by the Routh--Hurwitz criterion, instability of $\L$ occurs if either
\begin{align}\label{eq:Routh-Hurwitz_criterion}
    p_3(\mu) < 0 \quad\text{or}\quad p_1(\mu)p_2(\mu) - p_3(\mu) < 0
\end{align}
for some $\mu=\lambda_j$ eigenvalue of $-\Delta$. 

\begin{remark}\label{rem:Turing-Hopf}
    Assuming that $J_1<0$, $\det J_{12} > 0$ and $\det J_{13} >0$, one verifies that
    \begin{align*}
        p_3(\mu) = p_3 + [(\det J_{13})\Ds + (\det J_{12})\Df] \mu - J_1 \Ds \Df \mu^2
        > 0 \quad \text{for all} \quad \mu \geqslant 0,
    \end{align*}
    which excludes the existence of zero eigenvalues of $\L$.
    This means if we have DDI assuming these inequalities, we have Turing instability caused by the destabilization of a pair of complex conjugated eigenvalues, often called Turing--Hopf bifurcation, wave instability or W-instability.

    We know that for example $\tr J_{12} > 0$ is sufficient for DDI and produces a Turing-Hopf bifurcation when additionally it holds $\det J_{12} > 0$.
\end{remark}

We now determine which subsystems of $J$ must be unstable for the
steady state $\CSS$ to exhibit DDI. In particular, neither $J_2>0$ nor $J_3>0$ can alone induce DDI.
\begin{theorem}\label{prop:Discussion_possibil_DDI_3_equs}
If $\CSS$ exhibits DDI, then $J$ is stable and at least one of the subsystems $J_1$, $J_{12}$, $J_{13}$, or $J_{23}$ is unstable.
\end{theorem}
\begin{proof}
By \cite[Theorem~1.2]{Sakamoto12}, if $J$ and all its one- and two-component
subsystems are stable, then the steady state remains stable for all
$\Ds,\Df>0$. Equivalently, when $J_1$, $J_{12}$, $J_{13}$, $J_{23}$, and $J$ are all stable,
one verifies that $p_3(\mu)>0$ and $p_1(\mu)p_2(\mu)-p_3(\mu)>0$ for all
$\mu\geqslant 0$. 
\end{proof}

From~\eqref{eq:TwoConditionsDDI} we know that instability of $J_1$,
$J_{12}$, or $J_{13}$ is sufficient to induce DDI, in accordance with the classical LALI (local activation,
long-range inhibition) principle.
The case in which instability originates solely from $J_{23}$ is therefore of independent interest.
First, this situation lies outside the classical Turing paradigm,
which assumes an unstable slow- or nondiffusing activator coupled
to a stable fast-diffusing inhibitor.
Second it will always cause a Turing--Hopf bifurcation, i.e.\ a Hopf bifurcation purely induced by the Turing mechanism, see Remark \ref{rem:Turing-Hopf}. 
Third, the instability of $J_{23}$ can induce DDI but is not a sufficient condition, see Example \ref{ex:DDI_from_PDE_subsystem}.
However, it is easy to check that $\det J_{23} < 0$ is crucial to obtain DDI as $\tr J_{23} > 0$ alone cannot change the sign of $p_1(\mu)p_2(\mu)-p_3(\mu)$.

In applications it is common to apply a quasi-steady-state
approximation (QSSA) to eliminate a nondiffusing component, under the
assumption that this reduction preserves the qualitative dynamics of
the system \cite{Haerting17,Zhang2024}. In the case of three equations, QSSA results in a
classical two-component reaction--diffusion system, in which DDI occurs precisely when the slow-diffusing component satisfies the autocatalysis condition.
However, the latter requires $\det J_{12} < 0$, what can be shown for $J_1 < 0$ using the implicit function theorem.  
Consequently, if DDI in
the full model is induced by instability of $J_{23}$ or by the
condition $\tr J_{12} > 0$, then it disappears after
applying the QSSA, see Example \ref{ex:DDI_from_PDE_subsystem}.
In such case, the Turing mechanism induces a Hopf bifurcation (see Remark~\ref{rem:Turing-Hopf}), a phenomenon that cannot occur in a system consisting of only two reaction--diffusion equations.
For a more detailed discussion under which conditions DDI is preserved after applying QSSA see \cite{Smith}.

\begin{example}[DDI resulting form instability of $J_{23}$]\label{ex:DDI_from_PDE_subsystem}
    We choose
    \begin{align*}
        J = \begin{pmatrix}-1&9&1.5\\-9&-1&5\\-2&3.5&-1\end{pmatrix}.
    \end{align*}
    A direct computation yields
    \begin{align*}
        s(J) < 0, \quad J_1<0, \quad J_2<0, \quad J_3 < 0, \quad s(J_{12})<0, \quad s(J_{13})<0, \quad s(J_{23}) > 0
    \end{align*}
    and
    \begin{align*}
        p_1(\mu)p_2(\mu) - p_3(\mu) &= 3.75 + (-6.5 \Df + 71.5 \Ds) \mu\\
        &\quad+ (2\Ds^2 + 2\Df^2 + 6\Ds\Df) \mu^2 + (\Ds^2\Df + \Ds\Df^2) \mu^3.
    \end{align*}
    For instance, choosing $\Ds = 0.001$ and $\Df = 1$ gives
    \begin{align*}
        p_1(\mu)p_2(\mu) - p_3(\mu) < 0 \text{ for } \mu \in (0.767, 2.433).
    \end{align*}
    On $\Omega = (0,\pi)$ with $\lambda_1 = 1 \in (0.767,\, 2.433)$, we obtain $s(\L) > 0$, so the system exhibits DDI.

    Reducing the system to the two diffusive components, the steady state is stable and no DDI occurs.
    Indeed, consider the linear system
    \begin{align*}
        \partial_t \begin{pmatrix}\un\\\us\\\uf\end{pmatrix} = \begin{pmatrix}0\\\Ds \Delta \us\\\Df\Delta \uf\end{pmatrix} + \begin{pmatrix}-1&9&1.5\\-9&-1&5\\-2&3.5&-1\end{pmatrix} \begin{pmatrix}\un\\\us\\\uf\end{pmatrix}.
    \end{align*}
    Applying the quasi-steady-state approximation (QSSA) to eliminate $\un$ yields the reduced Jacobian
    \begin{align*}
        \bar{J} = \begin{pmatrix}-82&-8.5\\-14.5&-4\end{pmatrix}, \quad \tr\bar{J} = -86 < 0, \quad \det\bar{J} = 204.75 > 0.
    \end{align*}
Since $\bar{J}$ and its principal submatrices are stable, the reduced system is stable for all choices of diffusion coefficients $\Ds, \Df > 0$.
\end{example}

\subsection{Large fast diffusion and domain rescaling}

The conditions derived in Section \ref{sec:DDI_general_conditions} describe DDI in terms of the diffusion coefficients by fixing the diffusion coefficients of the fast-diffusing subsystem and taking the remaining diffusion coefficients sufficiently small.
An equivalent formulation can be obtained by fixing the slow-diffusion coefficients and changing the fast-diffusion coefficients. However, this would additionally require a rescaling of the spatial domain.

{From the perspective of applications, these viewpoints play
different roles. While some diffusion coefficients may vary, others are
typically fixed physical parameters and cannot be adjusted. In contrast,
the size of the spatial domain may change, for instance through growth
of biological tissue or through a change in the spatial scale of the
system under consideration. Consequently, certain instability mechanisms
may become observable only when the domain is sufficiently large.}

A simple rescaling argument yields the
following counterpart of Theorem~\ref{thm:transfer_DDI_positive_D1}.

\begin{corollary}
\label{cor:transfer_stab_incr_domain}
Let $\Ds > 0$ be fixed and suppose $s(\L_{0,\hat{D}}) > 0$ for some $\hat{D} > 0$.
Then, there exists $d_0 \geqslant 1$ and $L_0 \geqslant 1$ such that
\begin{align}
s(\L_{\Ds,\,d\hat{D}}) > 0
\quad\text{for all}\quad d \geqslant d_0,
\end{align}
provided $\L_{\Ds,\,d\hat{D}}$ is considered on the scaled domain $L\Omega$ with $L \geqslant L_0$.
\end{corollary}

\begin{proof}
The claim follows by rescaling the system, applying Theorem~\ref{thm:transfer_DDI_positive_D1}, and then transforming back to the original variables.  
Since Theorem~\ref{thm:transfer_DDI_positive_D1} is stated for a fixed domain, the rescaling may result in a larger spatial domain $L\Omega$ with $L \geqslant 1$.

Assume $s(\L_{0,\hat{D}}) > 0$ for some $\hat{D} > 0$ and fix $\Ds = \mathrm{diag}(d_1,\dots,d_{\ms})$.  
By Theorem~\ref{thm:transfer_DDI_positive_D1}, there exists $\varepsilon > 0$ such that  
\[
s(\L_{D,\,\hat{D}}) > 0 \quad \text{for all} \quad 0 \le D < \varepsilon .
\]
Choose $\tilde{d} \ge 1$ so that $d_i / \tilde{d} < \varepsilon$ for every $i=1,\dots,\ms$.  
Then $s(\L_{\,\frac{1}{d}\Ds,\,\hat{D}}) > 0$ for all $d \ge \tilde{d}$.

Rescaling space by the factor $\sqrt{d}$ transforms the domain to
\[
\tilde{\Omega} = \sqrt{d}\,\Omega =: L_d \Omega
\]
and gives $s(\L_{\Ds,\,d\,\hat{D}}) > 0$ for all $d \ge \tilde{d}$.  
Since $\tilde{d} \ge 1$, we have $L_d \ge 1$, with $L_{\tilde{d}} = 1$ if $\varepsilon > 0$ is large enough to ensure $d_i < \varepsilon$ for all $i$.  
\end{proof}

\begin{remark}
\label{rem:increasing_domain_nec}
    Increasing only $\Df$ while keeping $\Ds$ fixed is, in general, insufficient to induce DDI. 
    An enlargement of the spatial domain with fixed $\Ds$ is required, since this effectively decreases $\Ds$ relative to $\Df$ and thereby restores the instability mechanism.

    In case of instability of $J_{12}$, we have $s(\L_{0,\hat{D}}) > 0$ for all $\hat{D} > 0$ and can choose $\hat{D}$ arbitrary.
    If additionally $J$ is stable, we have DDI for $\Ds$ fixed, $\Df = d\hat{D}$, $d>0$ sufficiently large, on domain $L\Omega$, $L\ge1$ sufficiently large.
\end{remark}

\begin{remark}
In case of a system of three equations, one can explicitly determine the domain scaling required for DDI with unstable $J_{12}$ using the Routh--Hurwitz criterion.
We assume that
\[
s(J)<0,\quad J_1 < 0,\quad  \Ds \text{ fixed}, \quad \Df \text{ arbitrarily large}.
\]
Depending on whether $\tr J_{12} > 0$, $\det J_{12} < 0$, or both, one can estimate the leading-order terms of $p_3$ and $p_1p_2 - p_3$ in $\Df$ to determine the range of $\mu$ where instability of $-\mu D + J$ occurs, and thus choose $L$ such that the first positive Laplacian eigenvalue $\lambda_1$ on $L\Omega$ lies within this range.
From this, we obtain:
\begin{itemize}
    \item Case $\tr J_{12} > 0$:

    For $\varepsilon \in \big(0, \frac{\tr J_{12}}{2\Ds}\big)$, there exists $\Df > 0$ sufficiently large such that
    \begin{align}
        p_1(\mu)p_2(\mu) - p_3(\mu) < 0 \quad\text{for}\quad \mu \in \left(\varepsilon, \frac{\tr J_{12}}{\Ds} - \varepsilon\right).
    \end{align}
    Choosing $L \geqslant 1$ large enough so that $\lambda_1 < {\tr J_{12}}/{\Ds}$ 
    yields DDI.

    \item Case $\det J_{12} < 0$:

    For $\varepsilon \in \big(0, \frac{\det J_{12}}{2 J_1 \Ds}\big)$, there exists $\Df > 0$ sufficiently large such that
    \[
        p_3(\mu) < 0 \quad\text{for}\quad \mu \in \left(\varepsilon, \frac{\det J_{12}}{J_1 \Ds} - \varepsilon\right).
    \]
    Choosing $L \geqslant 1$ so that $\lambda_1 < {\det J_{12}}/({J_1 \Ds})$ yields DDI for large $\Df$.

    \item Case $\tr J_{12} > 0$ and $\det J_{12} < 0$:  
    DDI occurs for large $\Df$ if
    \[
        \lambda_1 \in \left(0, \max\left\{\frac{\tr J_{12}}{\Ds}, \frac{\det J_{12}}{J_1 \Ds}\right\}\right).
    \]
\end{itemize}
\end{remark}

\section{Existence of far-from-equilibrium patterns}
\label{sec:existence_jump}

As discussed in the introduction, reaction--diffusion--ODE systems exhibit a richer structure than classical reaction--diffusion models due to the presence of nondiffusive components.
In particular, such systems may give rise to irregular or singular patterns, including spike-type solutions or jump discontinuities, which we refer to as far-from-equilibrium patterns.
In this section, we focus on the latter case.

We establish the existence of nonconstant stationary solutions to the reaction--diffusion--ODE system~\eqref{eq:full_system}, consisting of a nondiffusive block $\Nun$ coupled to diffusive components $(\Nus, \Nuf)$. Notably, the stationary solutions may exhibit discontinuities in $\Nun$. The corresponding stationary problem is given by
\begin{nalign}
\label{eq:StationaryProblem}
\left\{
\begin{aligned}
    0 &= \fn(\Nun,\Nus, \Nuf),\\
    0 &= \Ds \Delta \Nus + \fs(\Nun,\Nus, \Nuf), \\
    0 &= \Df \Delta \Nuf + \ffa(\Nun,\Nus, \Nuf),
\end{aligned}
\right.
\end{nalign}
where $\Ds > 0$ and $\Df > 0$ denote the diffusion coefficients of the slow and fast components, respectively.
Our goal is to construct \emph{far-from-equilibrium} patterns, that is, stationary solutions that differ from a spatially homogeneous steady state on a set of positive measure. The construction is based on the approach developed in \cite{Cygan2023}, suitably adapted to the present multi-scale framework.

\subsection{Two-branch structure in the \texorpdfstring{$\Nun$}{u}-nullcline}
Let $\CSS = (\Cun,\Cus,\Cuf)\in\RR^m$ denote a constant 
solution of~\eqref{eq:StationaryProblem} with Jacobian $J$ with submatrices as in~\eqref{eq:definition_submatrices}.

\begin{assumption}
\label{ass:StatSol}
The linearization matrices satisfy
\begin{nalign}
    \det J_1 \neq 0 \quad \text{and} \quad \det \left(J - D\lambda_j \right)\neq 0 \quad \text{for all Neumann eigenvalues } \lambda_j \text{ of } -\Delta \text{ on } \Omega.
    \end{nalign}
\end{assumption}
This ensures that the implicit function theorem applies to $\fn(\cdot, \us, \uf) = 0$, yielding a $C^2$ map
\begin{align*}
    \varphi:\mathcal{V}\times\mathcal{W} \to \mathbb{R}^{\mn},\quad \varphi(\Cus, \Cuf) = \Cun,\quad \fn(\varphi(\us,\uf),\us,\uf) = 0,
\end{align*}
for $(\us, \uf)$ in a neighborhood $\mathcal{V}\times\mathcal{W}$ of $(\Cus, \Cuf)$.
The key ingredient for our construction is the existence of \textit{multiple distinct solution branches} for the equation $\fn(\un, \us, \uf) = 0$ with respect to the nondiffusive variable $\un$, all defined on the same domain $\mathcal V \times \mathcal W \subseteq \RR^{\md}$.
\begin{assumption}[Two distinct branches]
\label{ass:DiscontinuousStationaryTwoBranches}
There exist two distinct $C^2$--maps $\varphi,\psi:\mathcal{V}\times\mathcal{W}\to\mathbb{R}^{\mn}$ with
\begin{align*}
    \fn(\varphi(\us,\uf),\us,\uf) = \fn(\psi(\us,\uf),\us,\uf) = 0,
\end{align*}
$\varphi(\Cus,\Cuf) = \Cun$ and $\varphi\neq\psi$ on $\mathcal{V}\times\mathcal{W}$.
\end{assumption}
These maps form two branches of the $\Nun$-nullcline and allow constructing stationary solutions of system~\eqref{eq:full_system} that switch between those branches.

\subsection{Localised branch switching}
We study the effect of spatially localised perturbations by allowing the reaction terms to differ on complementary subdomains of~$\Omega$. The main mechanism is that a modification of the kinetics restricted to a subset of sufficiently small measure can be treated as a perturbation of the original system.

We first establish uniform $W^{2,p}$-bounds for the diffusive subsystem.

\begin{lemma}
	\label{DisLemma1} 
    Fix $p\in [2, \infty)$ and let $ D^d\in\RR^{\md\times \md}$ be a diagonal matrix with positive entries and $J^d \in \RR^{\md\times \md}$ satisfying 
    \begin{align}
    \det(J^d - D^d\lambda_j) \neq 0 \quad \text{for all} \quad j\in\NN_0.
    \end{align}
    Then, the operator 
    \begin{align}
        \L^d := D^d\Delta + J^d \quad \text{with the domain} \quad D\left(\mathcal L^d\right)=\left\{\nu\in W^{2,p}(\Omega)^{\md}:\ \partial_n \nu=0\text{ on }\partial\Omega\right\}
    \end{align} 
    is invertible and $\| (\L^d)^{-1} \xi\|_{W^{2,p}(\Omega)} \leqslant C \|\xi\|_{L^p(\Omega)}$ 
    for each $\xi \in L^p(\Omega)^{\md}$.
\end{lemma}

\begin{proof}
Let $\nu\in D(\mathcal L^d)$. By the standard elliptic estimate for the Neumann problem,
\begin{align}
\|\nu\|_{W^{2,p}(\Omega)^{m_d}}
\leqslant C\Big(\left\|\mathcal L^d \nu\right\|_{L^p(\Omega)^{m_d}}+\|\nu\|_{L^p(\Omega)^{m_d}}\Big).
\end{align}
Spectral characterization given by formula \eqref{eq:eigenvalues_at_constant_steady_state} with $\mn = 0$ yields $0\notin \sigma(\L^d)$ on $L^2(\Omega)^{\md}$.
Therefore, $\mathcal L^d:D(\mathcal L^d)\to L^p(\Omega)^{\md}$ is invertible and the inverse is bounded for each $p \in [1,\infty)$. The proof can be found \textit{e.g.} in \cite[Corollary 2.4]{KMMü23} for $p>n/2$ but it can be easily extended for $p\geqslant 2$.
Therefore, setting $\nu = (\L^{d})^{-1} \xi$ we obtain
\begin{align}
\left\|\left(\L^d\right)^{-1}\xi\right\|_{W^{2,p}(\Omega)^{m_d}}
\leqslant C\Big(\|\mathcal \xi\|_{L^p(\Omega)^{m_d}}+\left\|\left(\L^d\right)^{-1} \xi\right\|_{L^p(\Omega)^{m_d}}\Big) \leqslant C\|\mathcal \xi\|_{L^p(\Omega)^{m_d}}.
\end{align}
\end{proof}

Now we state a perturbation lemma allowing different reaction terms on two complementary subdomains.

\begin{lemma}
	\label{DisTheorem1}
	Let $p>n/2$, $p\geqslant 2$ and $q_1, q_2 \in C_b^2(\RR^{\md},\RR^{\md})$ {\rm (}\textit{i.e.}\ all derivatives up to order two are bounded{\rm )}. 
    Set $J_d:=\nabla_\nu q_1(\overline{\nu})$ and assume that for some $\overline{\nu} \in \RR^{\md}$ we have  
	\begin{nalign}
		q_1(\overline{\nu}) = 0 \quad \text{and} \quad \det\left(J^d - D^d \lambda_j\right)\neq 0 \quad \text{for each} \ j\in\NN_0,
	\end{nalign} 
    with a diagonal matrix $D^d>0$.\\
    Then, there exists $\varepsilon_0>0$ such that for for all $\varepsilon\in(0,\varepsilon_0)$ there exits  $\delta>0$ such that for any open subset $\Omega_1\subset \Omega$ with \mbox{$\Omega_2 = \Omega\setminus\overline{\Omega_1}$} and $|\Omega_2| \le \delta$, the problem
	\begin{nalign}
		\label{DisDefSingle}
		D^d\Delta \nu + q_1({\nu}) \mathbbm{1}_{\Omega_1} + q_2({\nu}) \mathbbm{1}_{\Omega_2} = 0 \quad \text{in}\ \Omega
	\end{nalign}
	admits a weak solution $ \nu\in W^{2,p}(\Omega)^{\md}$ with $\|\nu-\overline{\nu}\|_{W^{2,p}(\Omega)^{m_d}}\leqslant \varepsilon$. 
\end{lemma}

\begin{proof}
The argument is a straightforward vector-valued extension of the proof of the corresponding scalar result based on the Banach fixed point theorem \cite[Lemma 3.2]{Cygan2023}. We therefore indicate only the modifications required in the present setting.

Let $\L^d = D^d\Delta + J^d$.
After replacing \(\nu\) by \(\overline{\nu}+\omega\), equation \eqref{DisDefSingle} can be written in the equivalent form
\begin{align}
\omega = \mathcal T_1(\omega)+\mathcal T_2(\omega),
\end{align}
where
\begin{align}
\mathcal T_1(\omega):=\left(\mathcal L^d\right)^{-1}\Bigl(\bigl(J\omega-q_1(\overline{\nu}+\omega)\bigr)\mathbbm{1}_{\Omega_1}\Bigr),
\qquad
\mathcal T_2(\omega):=\left(\mathcal L^d\right)^{-1}\Bigl(\bigl(J\omega-q_2(\overline{\nu}+\omega)\bigr)\mathbbm{1}_{\Omega_2}\Bigr).
\end{align}
We consider the ball
\begin{align}
B_\varepsilon:=\{w\in W^{2,p}(\Omega)^{\md}:\|w\|_{W^{2,p}(\Omega)^{\md}}\le \varepsilon\}.
\end{align}
Since $q_1$ is bounded and $q_1(\overline{\nu}) = 0$, Taylor theorem yields
    \begin{align*}
        \left|J_d\omega(x) - q_1\big(\overline\nu+\omega\big)\right|
        &\leqslant \left| \frac{1}{2} \sum_{j,l=1}^k \partial_{\nu_l\nu_j} q_1(\overline\nu) \omega_j(x) \omega_l(x)\right|
        \leqslant C\left|\omega(x)\right|^2,
\end{align*}
for $\|w\|$ sufficiently small. Using the boundedness of $(\mathcal L^d)^{-1}$ and the embedding
\(W^{2,p}(\Omega)\hookrightarrow L^\infty(\Omega)\), valid since \(p>n/2\), we obtain
\begin{align}
\|T_1(\omega)\|_{W^{2,p}(\Omega)^{\md}}
\le C\|\omega\|_{L^\infty(\Omega)^{\md}}^2
\le C\varepsilon^2,
\end{align}
and similarly
\begin{align}
\|T_1(\omega)-T_1(\widetilde \omega)\|_{W^{2,p}(\Omega)^{\md}}
\le C\varepsilon \|\omega-\widetilde \omega\|_{W^{2,p}(\Omega)^{\md}}
\end{align}
for \(\omega,\widetilde \omega\in B_\varepsilon\).

For the term on \(\Omega_2\), since $q_2$ is also bounded, we only use the Lipschitz bound
\begin{align}
|J_d\omega-q_2(\overline{\nu}+\omega)|\leqslant C(1+|\omega|),
\end{align}
which implies
\begin{align}
\|T_2(\omega)\|_{W^{2,p}(\Omega)^{\md}}
\leqslant C |\Omega_2|^{1/p}(1+\|\omega\|_{L^\infty(\Omega)^{\md}})
\leqslant C |\Omega_2|^{1/p}(1+\varepsilon),
\end{align}
and
\begin{align}
\|T_2(\omega)-T_2(\widetilde \omega)\|_{W^{2,p}(\Omega)^{\md}}
\leqslant C |\Omega_2|^{1/p}\|\omega-\widetilde \omega\|_{W^{2,p}(\Omega)^{\md}}.
\end{align}

Hence, choosing \(\varepsilon>0\) sufficiently small and then \(\delta>0\) such that
\(|\Omega_2|\leqslant\delta\), the map \(\mathcal T:=\mathcal T_1+\mathcal T_2\) defines a contraction on \(B_\varepsilon\).
\end{proof}

\subsection{Main existence theorem} Combining the geometric assumptions on the $\Nun$-nullcline with the regularity and perturbation lemmas, we now establish the existence of far-from-equilibrium stationary solutions.
The result is obtained by applying the localised branch-switching lemma to the $(\Nus,\Nuf)$-subsystem and reconstructing $\Nun$ from the chosen nullcline branch.
In contrast to the conditions for DDI discussed in the preceding sections, the diffusion coefficients do not play a role for the existence as long as $D_v>0$, $D_w>0$ and Assumption \ref{ass:StatSol} is satisfied.

\begin{theorem}
	\label{DisExBan}
    Consider problem \eqref{eq:StationaryProblem} with a constant solution $\CSS$ satisfying Assumption~\ref{ass:StatSol} and Assumption \ref{ass:DiscontinuousStationaryTwoBranches}.
    There exists $\delta>0$ such that for any open subset $\Omega_1\subseteq\Omega$ with $\Omega_2 = \Omega \setminus \overline{\Omega_1}$ satisfying $|\Omega_2|< \delta$, problem \eqref{eq:StationaryProblem} admits a weak solution $(\Nus,\Nuf)= \big(\Nus(x),\Nuf(x)\big)$ to
		\begin{nalign}
			\label{eq:DinsontinousSolutionsVEquation}
			\Ds \Delta \Nus + \fs(\Nun, \Nus, \Nuf) = 0 \qquad \text{and} \qquad \Df \Delta \Nuf + \ffa(\Nus, \Nuf, \Nun) = 0, 
		\end{nalign}
		where
		\begin{nalign}
			\label{Uform}
			\Nun(x) = \begin{cases}
				\varphi\big(\Nus(x), \Nuf(x)\big), \quad x\in \Omega_1, \\
				\psi\big(\Nus(x), \Nuf(x)\big), \quad x\in \Omega_2,
			\end{cases}
		\end{nalign} 
		satisfies $\fn\big(\Nun(x),\Nus(x), \Nuf(x)\big) = 0$ for almost all $x\in \overline{\Omega}$. 
\end{theorem}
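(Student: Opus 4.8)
The plan is to eliminate the nondiffusive block $\Nun$ from~\eqref{eq:StationaryProblem} by substituting the two nullcline branches, and then apply the branch-switching Lemma~\ref{DisTheorem1} to the resulting closed system for $\nu := (\Nus,\Nuf)\in\RR^{\md}$, $\md=\ms+\mfa$. Set $\overline{\nu}=(\Cus,\Cuf)$, $D^d:=\diag(\Ds,\Df)$, and define $q_1,q_2$ on $\mathcal V\times\mathcal W$ by
\[
q_1(\us,\uf)=\begin{pmatrix}\fs(\varphi(\us,\uf),\us,\uf)\\ \ffa(\varphi(\us,\uf),\us,\uf)\end{pmatrix},\qquad
q_2(\us,\uf)=\begin{pmatrix}\fs(\psi(\us,\uf),\us,\uf)\\ \ffa(\psi(\us,\uf),\us,\uf)\end{pmatrix}.
\]
If $\nu$ solves $D^d\Delta\nu+q_1(\nu)\mathbbm{1}_{\Omega_1}+q_2(\nu)\mathbbm{1}_{\Omega_2}=0$, then defining $\Nun$ through~\eqref{Uform} produces a solution of~\eqref{eq:DinsontinousSolutionsVEquation} with $\fn(\Nun,\Nus,\Nuf)=0$ a.e.: on $\Omega_1$ we have $\Nun=\varphi(\Nus,\Nuf)$, so $\fn=0$ by definition of $\varphi$ while the two components of the $\nu$-equation read $\Ds\Delta\Nus+\fs=0$ and $\Df\Delta\Nuf+\ffa=0$; the case $\Omega_2$ is identical with $\psi$. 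Since $\varphi(\overline{\nu})=\Cun$ and $\CSS$ solves~\eqref{eq:StationaryProblem}, we have $q_1(\overline{\nu})=0$, which is the zero-of-$q_1$ hypothesis of Lemma~\ref{DisTheorem1}.

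Next I would verify the non-degeneracy hypothesis $\det(\nabla_\nu q_1(\overline{\nu})-D^d\lambda_j)\neq0$. Differentiating $\fn(\varphi(\us,\uf),\us,\uf)\equiv0$ and using $\det J_1\neq0$ gives $\nabla_{(\us,\uf)}\varphi(\overline{\nu})=-J_1^{-1}\,\nabla_{(\us,\uf)}\fn(\CSS)$, so by the chain rule $\nabla_\nu q_1(\overline{\nu})$ is the Schur complement of $J_1$ in $J$. Writing the full Jacobian in block form relative to $\un$ and $(\us,\uf)$,
\[
J=\begin{pmatrix}J_1 & B\\ C & E\end{pmatrix},\quad B=\nabla_{(\us,\uf)}\fn(\CSS),\quad C=\nabla_{\un}(\fs,\ffa)(\CSS),\quad E=\nabla_{(\us,\uf)}(\fs,\ffa)(\CSS),
\]
we obtain $\nabla_\nu q_1(\overline{\nu})=E-CJ_1^{-1}B$. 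Since the full diffusion matrix is $D=\diag(0,D^d)$, the Schur-complement identity yields
\[
\det(J-D\lambda_j)=\det J_1\cdot\det\big(E-D^d\lambda_j-CJ_1^{-1}B\big)=\det J_1\cdot\det\big(\nabla_\nu q_1(\overline{\nu})-D^d\lambda_j\big).
\]
By Assumption~\ref{ass:StatSol} the left-hand side and $\det J_1$ are nonzero for every Neumann eigenvalue $\lambda_j$, hence $\det(\nabla_\nu q_1(\overline{\nu})-D^d\lambda_j)\neq0$ for all $j$, as required.

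It remains to reconcile the fact that $q_1,q_2$ are only defined and $C^2$ on $\mathcal V\times\mathcal W$, whereas Lemma~\ref{DisTheorem1} needs $q_1,q_2\in C_b^2(\RR^{\md},\RR^{\md})$. I would fix $r>0$ with $\overline{B_{2r}(\overline{\nu})}\subset\mathcal V\times\mathcal W$ and a smooth cutoff to extend $q_1,q_2$ to maps $\widehat q_1,\widehat q_2\in C_b^2(\RR^{\md},\RR^{\md})$ coinciding with $q_1,q_2$ on $B_r(\overline{\nu})$; this leaves $\widehat q_1(\overline{\nu})=0$ and $\nabla_\nu\widehat q_1(\overline{\nu})=\nabla_\nu q_1(\overline{\nu})$ unchanged, so the hypotheses verified above persist. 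Applying Lemma~\ref{DisTheorem1} to $\widehat q_1,\widehat q_2$ yields $\varepsilon_0>0$ and, for $\varepsilon\in(0,\varepsilon_0)$, a threshold $\delta>0$ such that whenever $|\Omega_2|\le\delta$ there is a weak solution $\nu\in W^{2,p}(\Omega)^{\md}$ with $\|\nu-\overline{\nu}\|_{W^{2,p}}\le\varepsilon$. Because $p>n/2$, the embedding $W^{2,p}(\Omega)\hookrightarrow L^\infty(\Omega)$ gives $\|\nu-\overline{\nu}\|_{L^\infty}\le C\varepsilon$; choosing $\varepsilon$ so small that $C\varepsilon<r$ forces $\nu(x)\in B_r(\overline{\nu})$ for a.e.\ $x$, where $\widehat q_i=q_i$. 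Hence $\nu$ solves the untruncated reduced system, and reconstructing $\Nun$ via~\eqref{Uform} (which is generally discontinuous across $\partial\Omega_1$ since $\varphi\neq\psi$) produces the asserted far-from-equilibrium solution.

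The main obstacle is the consistency of this localisation: one must guarantee that the fixed point delivered by Lemma~\ref{DisTheorem1} lands inside the ball $B_r(\overline{\nu})$ on which the cutoff is inactive, so that the truncated and genuine nonlinearities agree and $\nu$ actually solves~\eqref{eq:StationaryProblem}. This is exactly where the order of quantifiers is delicate: $r$ is fixed first, then $\varepsilon$ is taken small relative to $r$ through the quantitative $W^{2,p}\hookrightarrow L^\infty$ bound, and only afterwards is $\delta$ supplied by Lemma~\ref{DisTheorem1}. The Schur-complement identity of the second step is the other essential ingredient, but it is a routine block computation once the conditions of Assumption~\ref{ass:StatSol} are transported to the reduced setting.
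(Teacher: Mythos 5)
Your proposal is correct and follows essentially the same route as the paper: reduce to the $(\Nus,\Nuf)$-subsystem, define $q_1,q_2$ by composing $(\fs,\ffa)$ with the two branches, verify the nondegeneracy hypothesis of Lemma~\ref{DisTheorem1} via the Schur-complement identity $\det(J-D\lambda_j)=\det J_1\cdot\det\bigl(\nabla_\nu q_1(\overline{\nu})-D^d\lambda_j\bigr)$, and use the $W^{2,p}\hookrightarrow L^\infty$ embedding to ensure the fixed point stays where the $C^2_b$-extension coincides with the original nonlinearities. The only cosmetic difference is that you extend $q_1,q_2$ by a cutoff while the paper extends $\varphi,\psi$ themselves, and your explicit ordering of quantifiers ($r$, then $\varepsilon$, then $\delta$) is if anything slightly more careful than the paper's presentation.
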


\begin{proof}
        Let us choose $\varepsilon >0$ so small that $B_\varepsilon(\Cus, \Cuf) \subseteq \mathcal{V}\times \mathcal W$.
        \\
        First, we restrict $\varphi$ and $\psi$ to $B_\varepsilon(\Cus, \Cuf)$ and then we extend them in arbitrary way to $\tilde{\varphi}, \tilde{\psi} \in C^2_b(\RR^{\md}, \RR^{\mn})$ such that $\tilde{\varphi}= \varphi$ and $\tilde{\psi} = \psi$ on $B_\varepsilon(\Cus, \Cuf)$. Now, we define 
        \begin{align}\label{eq:def_q1_q2}
            q_1(\us,\uf) = \begin{pmatrix}    
            \fs\left( \tilde{\varphi}(\us,\uf), \us,\uf \right)    \\
            \ffa\left( \tilde{\varphi}(\us,\uf), \us,\uf \right)
            \end{pmatrix} \qquad \text{and} \qquad 
            q_2(\us,\uf) = \begin{pmatrix}    
            \fs\left( \tilde{\psi}(\us,\uf), \us,\uf \right)    \\
            \ffa\left( \tilde{\psi}(\us,\uf), \us,\uf \right)
            \end{pmatrix},
        \end{align}
        which satisfy assumptions of Lemma ~\ref{DisTheorem1}. Obviously $q_1, q_2$ are $C^2$-bounded 
        together with $q_1(\Cus, \Cuf) = 0$.
        Using $\fn(\varphi(v,w),v,w) = 0$ on $B_\varepsilon(\Cus, \Cuf)$, one can calculate 
        \begin{align}\label{eq:deriv_tilde_phi}
            \nabla_{(v,w)} \tilde\varphi(\Cus, \Cuf) =
            \begin{pmatrix}-\nabla_{\un}\fn\left(\CSS\right)^{-1} \nabla_{\us}\fn\left(\CSS\right) \\
            -\nabla_{\un}\fn\left(\CSS\right)^{-1} \nabla_{\uf}\fn\left(\CSS\right)
            \end{pmatrix}.
        \end{align}
        The same holds for $\nabla_{(\us,\uf)}\tilde{\psi}(\Cus,\Cuf)$. For notational simplicity, we write $\nabla f:= \nabla f(\CSS)$ and use similar abbreviations for other derivatives.

    Then, using {Assumption \ref{ass:StatSol}}, \eqref{eq:def_q1_q2}, \eqref{eq:deriv_tilde_phi} and applying Schur's formula for determinants of block matrices, we obtain
        \begin{nalign}
		\det \big(\nabla_{(\us,\uf)} & q_1(\Cus, \Cuf) - D^d\lambda_j\big) =        
        \frac{1}{\det\left(\nabla_u f\right)}\det\left(J - D\lambda_j\right)\neq 0.
	\end{nalign}    
	Hence for each $p\in[2,\infty)$, by Lemma \ref{DisTheorem1}, we obtain a solution $(\Nus,\Nuf)\in W^{2,p}(\Omega)^{\md}$ to problem~\eqref{DisDefSingle} which satisfies $\| \Nus-\Cus\|_{W^{2,p}} + \| \Nuf-\Cuf\|_{W^{2,p}} \leqslant \varepsilon$ for arbitrary small $\varepsilon>0$ provided $|\Omega_2|>0$ is sufficiently small. By the Sobolev embedding $W^{2,p}(\Omega) \subseteq L^\infty(\Omega)$ with $p>N/2$ 
	and $\varepsilon >0$ small enough, we obtain $\tilde{\varphi} = \varphi$ and $\tilde{\psi} = \psi$ for all $x\in \Omega$ which completes the construction of a solution to problem \eqref{eq:StationaryProblem}.
\end{proof}

\begin{remark}[Proximity to $\CSS$]\label{rem:Jump_pattern_far_from_CSS}
    The stationary solution $(\Nun, \Nus,\Nuf)$ constructed in Theorem \ref{DisExBan} stays close to the points 
    $\big(\Cun, \Cus,\Cuf\big)$  and 
    $\big(\psi (\Cus,\Cuf), \Cus,\Cuf\big)$
    in the following sense.
    There exists $\varepsilon_0>0$ such that, for every $\varepsilon \in (0,\varepsilon_0)$, we find $\delta(\varepsilon)>0$ such that the solution constructed in Theorem \ref{DisExBan} satisfies
    \begin{gather}
        \label{DisVar}
        \|\Nus - \Cus \|_{L^\infty}  < \varepsilon, \qquad \|\Nuf - \Cuf \|_{L^\infty} < \varepsilon, \quad \text{and} \\ 
        \|\Nun -  \Cun \|_{L^\infty(\Omega_1)} + \|\Nun - \psi (\Cus, \Cuf) \|_{L^\infty(\Omega_2)}  < C\varepsilon.
    \end{gather}  
    If such solution is stable, we refer to it as \emph{far-from-equilibrium patterns} since the solution is "far away" from the equilibrium $(\Cun, \Cus, \Cuf)$ on $\Omega_2$ (depending on the distance of the two branches of $f=0$).
\end{remark}

\begin{remark}[Continuity and jumps]
\label{rem:Conti}
By Sobolev embedding, the diffusive components $\Nus$ and $\Nuf$ belong to $C(\overline{\Omega})$ and are therefore continuous.  
In contrast, the nondiffusive component $\Nun$, defined by~\eqref{Uform}, may exhibit jump discontinuities across $\partial\Omega_1$ whenever $\varphi(\nu,\omega) \neq \psi(\nu,\omega)$ in $\mathcal{V}\times\mathcal{W}$.  
Such discontinuities occur when the parameter $\varepsilon>0$ in~\eqref{DisVar} is chosen sufficiently small.
\end{remark}

\begin{remark}[Multiplicity and stability]
The subset $\Omega_2\subset\Omega$ can be chosen arbitrarily, provided its measure is sufficiently small, yielding infinitely many distinct stationary solutions.  

The stability of these patterns is a separate issue and may depend on the geometry of $\Omega_2$ as well as on the diffusion coefficients.
In practice, $\Omega_2$ does not need to be very small, as illustrated in Section~\ref{sec:Application}.
However, in general, the constructed solutions are not guaranteed to be stable.

In particular, suppose that the submatrix $J_1 = \nabla_{\un} f$, associated with the nondiffusive variables and evaluated at either $\CSS$ or at $(\psi(\Cus,\Cuf),\Cus,\Cuf)$, has an eigenvalue with positive real part.
Then, for sufficiently small $\delta>0$, the stationary solutions constructed in Theorem~\ref{DisExBan} are linearly unstable. This follows from Remark~\ref{rem:Jump_pattern_far_from_CSS} together with the spectral characterization in~\eqref{eq:Spectrum_of_general_L} and standard perturbation arguments for linear operators.
\end{remark}

\section{Application to a receptor-based model}\label{sec:Application}

In this section, we illustrate the theoretical framework developed above by applying it to a receptor-based model consisting of one ODE and two PDEs. The model is inspired by~\cite{Marciniak06}, and describes the dynamics of receptors (\(u\)), ligands (\(v\)), and enzymes (\(w\)) on a one-dimensional spatial domain.

Let $\Omega = (0,L)\subset\mathbb{R}$, with $L>0$. The governing equations are
\be \label{eq:toysys}
\left\{
\begin{aligned}
    \del_t u &= -\mu_1 u + m_1 uv (1 + u v)^{-1}, & \textup{for } \ x \in \bar\Omega, \ t > 0,\\
    \del_t v &= D_v \Delta v -\mu_2 v + m_2 uv (1 + uv)^{-1} - vw, & \textup{for } \ x \in \Omega, \ t > 0,\\
    \del_t w &= D_w \Delta w -\mu_3 w + m_3 uv (1 + uv)^{-1}, & \textup{for } \ x \in \Omega, \ t > 0,
\end{aligned}
\right.
\ee
supplemented with homogeneous Neumann boundary conditions for $v$ and $w$, and nonnegative initial data.
Here $\mu_i \in (0,1]$ denotes decay rates, the $m_i \in [2,+\infty)$ are production rates, and $D_v, D_w > 0$ are the diffusion coefficients of ligands and enzymes respectively. The receptor population, $u$, is assumed diffusion-free.
The nonlinear production terms follow a Michaelis--Menten type saturation in the ligand--receptor binding.

Introducing $X = \left(u, v, w \right)^T$, $X_0 = \left(u_0, v_0, w_0 \right)^T$, and $D = \diag(0, D_v, D_w)$, the system is written in vector form
\begin{align}
\del_t X = D \Delta X + F \bigl( X \bigr), \quad \text{for} \ x \in \Omega \ \text{and} \ t > 0, 
\end{align}
with nonlinearities
\[F(X)  = \bpm -\mu_1 u + m_1 uv (1 + uv)^{-1} \\ -\mu_2 v + m_2 uv (1 + uv)^{-1} - vw \\ -\mu_3 w + m_3 uv (1 + uv)^{-1}\epm = \bpm f(X) \\ g(X) \\ h(X) \epm. \]
The existence, uniqueness, and boundedness of solutions are established in Appendix~\ref{app2}.

For the analysis of DDI, we require that the reduced kinetic system
\begin{align}
    \dot{ X}(t) = F \bigl( X(t) \bigr), \qquad X(0) = X_0,
\end{align}
possesses an asymptotically stable, non-negative steady state. 
To simplify the notation, we introduce the quantities
\begin{align}
\label{eq:EtaDef}
\eta_1 := \frac{m_1}{\mf}, \quad \eta_2 := \frac{m_2}{\ml}, \quad \eta_3 := \frac{m_3}{\me}, \quad \alpha := m_2 \eta_1 - \eta_3.
\end{align}

To formulate the upcoming results, we first collect the parameter constraints into the following assumption.

\begin{assumption}\label{assump:Parameters}
Let $\mu_i \in (0,1]$, $m_i\ge2$ for $i=1,2,3$, and $\eta_1, \eta_2, \eta_3, \alpha$ as defined in~\eqref{eq:EtaDef}. We assume that the following three conditions hold
\begin{enumerate}
    \item\label{item:first_assumption} $\zeta > 0$, where $\zeta := \alpha - 2\mu_2,$
    \item\label{item:sec_assumption} $\Theta > 0$, where $\Theta := \zeta^2 - 4 \mu_2 (\mu_2 + \eta_3),$
    \item\label{item:third_assumption} $\alpha + 2\eta_3 + \sqrt{\Theta} > 2\left(\frac{\mu_1}{\mu_3} - 1\right)(\mu_2 + \eta_3)\frac{\eta_1 \eta_3}{m_1 + m_2}.$
\end{enumerate}
\end{assumption}

Under Assumption~\ref{assump:Parameters}, the kinetic system admits three nonnegative steady states, as stated below.

\begin{theorem}\label{thm:bistabletoysys}
Under Assumption~\ref{assump:Parameters} the associated kinetic system admits three constant steady states:
\begin{align}
\CSS_{0} := (0, 0, 0)
\quad\text{and}\quad
\CSS_\pm := (\bar u_\pm, \bar v_\pm, \bar w_\pm),
\end{align}
with
\begin{align}
\label{eq:steadystates}
\bar v_\pm= \frac{\alpha + 2\eta_3 \pm \sqrt{\Theta}}{2 \eta_1 (\mu_2 + \eta_3)},\quad
\bar u_\pm = \eta_1 - \frac{1}{\bar v_\pm},\quad
\bar w_\pm = \eta_3 - \frac{\eta_3}{\eta_1 \bar v_\pm}.
\end{align}
Here, $\CSS_{0}$ is stable, $\CSS_-$ is positive and unstable, and $\CSS_+$ is positive and asymptotically stable.
\end{theorem}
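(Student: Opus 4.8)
The plan is to locate every nonnegative zero of the kinetic nonlinearity $F=(f,g,h)$ and then classify each equilibrium through the Routh--Hurwitz criterion applied to the Jacobian $J$ computed there. Starting from $F=0$, the equation $f=0$ factors as $u\bigl(-\mu_1+m_1 v(1+uv)^{-1}\bigr)=0$, so either $u=0$---which forces $v=w=0$ and yields $\CSS_0$---or $1+uv=\eta_1 v$, the positive nullcline. On that branch $f=0$ gives $\bar u=\eta_1-1/\bar v$ and $h=0$ gives $\bar w=\eta_3\bigl(1-1/(\eta_1\bar v)\bigr)$, reproducing \eqref{eq:steadystates}; substituting both into $g=0$ and clearing the denominator $1+uv=\eta_1\bar v$ collapses the system to the single quadratic
\begin{align*}
(\mu_2+\eta_3)\eta_1\,\bar v^{\,2}-(\alpha+2\eta_3)\,\bar v+m_2=0 .
\end{align*}
Using the identity $\eta_1 m_2=\alpha+\eta_3$ one checks that its discriminant equals $\Theta$ in both displayed forms, so Assumption~\ref{assump:Parameters}\ref{item:sec_assumption} produces two distinct real roots $\bar v_\pm$. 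Positivity of $\bar u_\pm,\bar w_\pm$ is equivalent to $\eta_1\bar v_\pm>1$; evaluating the quadratic at $\bar v=1/\eta_1$ gives the value $\mu_2/\eta_1>0$, and Assumption~\ref{assump:Parameters}\ref{item:first_assumption} (which forces $\alpha>2\mu_2$, hence the vertex to the right of $1/\eta_1$) places both roots above $1/\eta_1$. Finally, at the origin every derivative of $uv(1+uv)^{-1}$ vanishes, so $J=\diag(-\mu_1,-\mu_2,-\mu_3)$ and $\CSS_0$ is asymptotically stable.

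To classify $\CSS_\pm$ I would assemble the Jacobian using $\partial_u\bigl(uv(1+uv)^{-1}\bigr)=v(1+uv)^{-2}$ and the analogous $v$-derivative, then simplify along the positive nullcline via $1+uv=\eta_1\bar v$; writing $s=1/(\eta_1\bar v)\in(0,1)$ one has $\bar u=\eta_1(1-s)$ and $\bar w=\eta_3(1-s)$. The decisive quantity is the determinant, which after cancellation collapses to
\begin{align*}
\det J=\mu_1\mu_3\,(1-s)\,\bigl(m_2\eta_1 s^2-(\mu_2+\eta_3)\bigr).
\end{align*}
The steady-state relation $m_2\eta_1 s^2=(\alpha+2\eta_3)s-(\mu_2+\eta_3)$ turns the last factor into $\sqrt{\Theta}\bigl(\sqrt{\Theta}-(\alpha+2\eta_3)\bigr)/(2m_2\eta_1)<0$ at $\CSS_+$ and $\sqrt{\Theta}\bigl(\sqrt{\Theta}+(\alpha+2\eta_3)\bigr)/(2m_2\eta_1)>0$ at $\CSS_-$ (using $\sqrt{\Theta}<\alpha+2\eta_3$, since $\Theta<(\alpha+2\eta_3)^2$). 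As $1-s>0$, this yields $\det J<0$ at $\CSS_+$, i.e.\ $p_3=-\det J>0$, whereas $\det J>0$ at $\CSS_-$ forces a positive eigenvalue and hence the instability of $\CSS_-$ immediately. It then remains only to verify the two surviving Routh--Hurwitz conditions $p_1=-\tr J>0$ and $p_1p_2-p_3>0$ at $\CSS_+$.

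The last step is the genuine difficulty. Substituting the smaller root $\bar v_+$ together with the steady-state relations, one computes $p_1=\mu_2+\mu_3+(1-s)\bigl(\mu_1+\eta_3-m_2\eta_1 s^2\bigr)$ and a corresponding explicit expression for $p_1p_2-p_3$, and is left with polynomial inequalities in $\mu_i,m_i,\eta_i$ whose positivity is not automatic---the negative contribution $-m_2\eta_1 s^2$ can dominate when the decay rates are imbalanced. I expect Assumption~\ref{assump:Parameters}\ref{item:third_assumption}, which compares $\alpha+2\eta_3+\sqrt{\Theta}$ with a quantity proportional to $(\mu_1/\mu_3-1)$, to be precisely the condition controlling this imbalance and thereby securing both remaining inequalities; note that it is vacuous when $\mu_1\le\mu_3$ and becomes a true restriction only when $\mu_1>\mu_3$. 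Extracting this dependence cleanly from the algebra, rather than the determinant computation, is where the care is needed.
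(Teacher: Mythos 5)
Most of your proposal is correct, and in places cleaner than the paper's own argument. The reduction of $F=0$ to the quadratic $\eta_1(\mu_2+\eta_3)\bar v^2-(\alpha+2\eta_3)\bar v+m_2=0$ and the identification of its discriminant with $\Theta$ match the paper's Step I. Your positivity argument (the quadratic takes the value $\mu_2/\eta_1>0$ at $\bar v=1/\eta_1$ and, by Assumption~\ref{assump:Parameters}\eqref{item:first_assumption}, has its vertex to the right of $1/\eta_1$, so both roots exceed $1/\eta_1$) is a neater route than the paper's direct manipulation of the explicit root formula. The substitution $s=1/(\eta_1\bar v)$ and the resulting identity
\begin{align}
\det J=\mu_1\mu_3\,(1-s)\,\bigl(m_2\eta_1 s^2-(\mu_2+\eta_3)\bigr)
\end{align}
check out (after inserting the steady-state relation $m_2\eta_1 s^2=(\alpha+2\eta_3)s-(\mu_2+\eta_3)$, which is legitimate at an equilibrium), and your sign analysis correctly yields instability of $\CSS_-$ and $p_3=-\det J>0$ at $\CSS_+$; this reproduces, in tidier form, the paper's computation with the auxiliary quantities $\beta,\gamma,\delta$.

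The genuine gap is the final claim of the theorem: asymptotic stability of $\CSS_+$. For a $3\times3$ Jacobian, $p_1>0$ and $p_3>0$ do not suffice; the Routh--Hurwitz condition $p_1p_2-p_3>0$ must be verified, and your proposal explicitly stops short of this, offering only the expectation that Assumption~\ref{assump:Parameters}\eqref{item:third_assumption} "controls the imbalance." That is a conjecture about the one inequality that constitutes the real content of the stability statement, so the proof is incomplete exactly where the theorem is hardest. (Incidentally, $p_1>0$ needs no work at all: every diagonal entry of $J(\CSS_+)$ is negative, so $\tr J<0$; listing it as an open item alongside $p_1p_2-p_3>0$ slightly misplaces the difficulty.) The paper closes this gap with coarse but explicit estimates: it bounds $-\tr\bigl(J(\CSS_+)\bigr)>\mu_3$, writes out $-\det\bigl(J(\CSS_+)\bigr)$ and $\sum_{1\le i<j\le3}\det\bigl(J(\CSS_+)_{ij}\bigr)$ in comparable form, reduces $p_1p_2-p_3>0$ to
\begin{align}
m_1\bar v_+ + m_2\bar u_+ + \eta_3 \;>\; \frac{m_1\eta_3\,\bar v_+}{(1+\bar u_+\bar v_+)\,\mu_3},
\end{align}
and then, using $1+\bar u_+\bar v_+=\eta_1\bar v_+$ together with $\bar u_+>\bar v_+$, arrives at $\bar v_+>\bigl(\tfrac{\mu_1}{\mu_3}-1\bigr)\tfrac{\eta_3}{m_1+m_2}$, which is precisely Assumption~\ref{assump:Parameters}\eqref{item:third_assumption} rewritten via \eqref{eq:steadystates}. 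Some argument of this kind (your $s$-parametrization would serve equally well as a starting point) must be supplied before the theorem can be considered proved.
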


The proof of this result is given in Appendix \ref{app1}.
Assume from now on that the conditions of Theorem~\ref{thm:bistabletoysys} hold. 

\subsection{Diffusion-driven instability}
We now investigate whether the stable positive steady state $\CSS_+$ can destabilize through DDI.
Since $\CSS_{0}$ is Volterra--Lyapunov stable (Theorem~\ref{thm:NoDDI}) and $\CSS_-$ is unconditionally unstable, only $\CSS_+$ is relevant for DDI.

Let $J := J(\CSS_+)$ be the Jacobian matrix of the nonlinearities $F$ evaluated at $\CSS_+$. A transformation using the relation $F(\CSS_+) = 0$, allows us to express $J$ as follows:
\begin{align}    
J 
= \dfrac{1}{(1 + \bar u_+ \bar v_+)^2} \bpm -m_1 \bar u_+ \bar v_+^2 & m_1 \bar u_+  & 0 \\ m_2 \bar v_+ & -m_2 \bar u_+ ^2 \bar v_+ & -\bar v_+ (1 + \bar u_+ \bar v_+)^2 \\ m_3 \bar v_+ & m_3 \bar u_+  & -\me (1 + \bar u_+ \bar v_+)^2 \epm.
\end{align}

The diagonal element $J_1<0$ rules out the autocatalysis condition of Proposition~\ref{prop:DDI_autocatalysis}.
Furthermore, the $2\times 2$ submatrices $J_{13}$ and $J_{23}$ are stable (negative trace, positive determinant), leaving $J_{12}$ as the only viable source of DDI (Theorem~\ref{prop:Discussion_possibil_DDI_3_equs}).

\begin{theorem}[Instability of $J_{12}$]
\label{thm:J12unstable}
Suppose Assumption~\ref{assump:Parameters} holds and, in addition,
\begin{enumerate}
    \item[$(4)$] $\eta_1 < 2\left(\frac{\eta_3}{m_2} + \frac{2}{\eta_2}\right)$.
\end{enumerate}
Then $J_{12}$ has an unstable eigenvalue.
Consequently, there exists either $D_v > 0$ sufficiently small, or a pair $(D_w, L)$ with $D_w > 0$ and $L \geqslant 1$ both sufficiently large, such that $\CSS_+$ undergoes DDI.
\end{theorem}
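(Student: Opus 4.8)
The plan is to reduce the claimed instability of $J_{12}$ to a single scalar sign condition on the positive steady state $\CSS_+$, translate that condition into hypothesis $(4)$, and then invoke the two transfer results from Section~\ref{sec:general_theory} to obtain DDI. Throughout I use that Theorem~\ref{thm:bistabletoysys} makes $\CSS_+$ asymptotically stable for the kinetics, i.e. $s(J)<0$.

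First I would analyze the $2\times2$ block $J_{12}$ directly. Its trace equals $-\tfrac{\bar u_+\bar v_+(m_1\bar v_+ + m_2\bar u_+)}{(1+\bar u_+\bar v_+)^2}<0$, so a positive trace can never arise and $J_{12}$ is destabilized only through a sign change of its determinant. A direct computation gives $\det J_{12} = \tfrac{m_1 m_2\,\bar u_+\bar v_+}{(1+\bar u_+\bar v_+)^4}\,(\bar u_+^2\bar v_+^2 - 1)$. Since $\bar u_+,\bar v_+>0$ and $\tr J_{12}<0$, the matrix $J_{12}$ has a (real, positive) unstable eigenvalue if and only if $\bar u_+\bar v_+<1$.

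Next I would convert $\bar u_+\bar v_+<1$ into hypothesis $(4)$. The steady-state identity $\bar u_+ = \eta_1 - 1/\bar v_+$ from~\eqref{eq:steadystates} gives $\bar u_+\bar v_+ = \eta_1\bar v_+ - 1$, so the condition is equivalent to $\bar v_+ < 2/\eta_1$. From~\eqref{eq:steadystates}, $\bar v_+$ is the larger root of the upward-opening quadratic $Q(v):=\eta_1(\mu_2+\eta_3)v^2-(\alpha+2\eta_3)v+m_2$ with discriminant $\Theta$. Evaluating at $v=2/\eta_1$ yields $Q(2/\eta_1)=(4\mu_2+2\eta_3-m_2\eta_1)/\eta_1$, which is positive exactly when $m_2\eta_1<4\mu_2+2\eta_3$; recalling $\eta_2=m_2/\mu_2$, this is precisely $(4)$. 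Because $Q$ opens upward, $Q(2/\eta_1)>0$ places $2/\eta_1$ outside the root interval $[\bar v_-,\bar v_+]$. To conclude $\bar v_+<2/\eta_1$ rather than $2/\eta_1<\bar v_-$, I would check that $2/\eta_1$ lies to the right of the vertex $\tfrac{\alpha+2\eta_3}{2\eta_1(\mu_2+\eta_3)}$, which reduces to $m_2\eta_1<4\mu_2+3\eta_3$ and follows from $(4)$ with room to spare. Hence $\bar v_+<2/\eta_1$, so $\det J_{12}<0$, proving that $J_{12}$ has an unstable eigenvalue.

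Finally I would deduce DDI. Since $s(J)<0$ and $J_{12}$ is unstable, Corollary~\ref{cor:DDI} immediately yields DDI for any fixed $\Df>0$ and sufficiently small $\Ds\geqslant0$, the first alternative. For the second, I would observe that when $\Ds=0$ the nondiffusive block is $(\un,\us)$, so the essential spectrum of $\L_{0,\hat D}$ contains $\sigma(J_{12})$ for every $\hat D>0$; therefore $s(\L_{0,\hat D})\geqslant s(J_{12})>0$. Corollary~\ref{cor:transfer_stab_incr_domain} then supplies $d_0\geqslant1$ and $L_0\geqslant1$ with $s(\L_{\Ds,\,d\hat D})>0$ for all $d\geqslant d_0$ on the scaled domain $L\Omega$, $L\geqslant L_0$, which is the stated $(\Df,L)$-large scenario. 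The main obstacle is the middle step: correctly reading off the quadratic $Q$ from the closed-form steady states and ensuring the root comparison selects $\bar v_+$ and not $\bar v_-$ (the vertex check is the delicate point). A secondary subtlety is the essential-spectrum identification $\sigma(\L_{0,\hat D})\supseteq\sigma(J_{12})$ in the degenerate case $\Ds=0$, where two components are nondiffusive.
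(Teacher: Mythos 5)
Your proof is correct, and its skeleton coincides with the paper's: since $\tr J_{12}<0$, instability of $J_{12}$ is equivalent to $\det J_{12}<0$, hence (using $\bar u_+\bar v_+=\eta_1\bar v_+-1$) to $\bar v_+<2/\eta_1$; and the final DDI statement is obtained, as in the paper, from Corollary~\ref{cor:DDI} (small $D_v$) and Corollary~\ref{cor:transfer_stab_incr_domain} (large $D_w$ together with domain enlargement). The genuine difference lies in how $\bar v_+<2/\eta_1$ is converted into hypothesis $(4)$. The paper substitutes the closed form of $\bar v_+$ and reduces the radical inequality $\alpha+\sqrt{\alpha^2-4\mu_2(\alpha+\eta_3)}<2(\eta_3+2\mu_2)$ to $(4)$ ``using the definition of $\alpha$'' --- a step that tacitly requires checking the sign of $2(\eta_3+2\mu_2)-\alpha$ before squaring. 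You instead evaluate the steady-state quadratic $Q(v)=\eta_1(\mu_2+\eta_3)v^2-(\alpha+2\eta_3)v+m_2$ from \eqref{eq:steadystatev} at the test point $2/\eta_1$, obtaining $Q(2/\eta_1)=(4\mu_2+2\eta_3-m_2\eta_1)/\eta_1$, which is positive exactly under $(4)$, and then use the vertex comparison $2/\eta_1>\tfrac{\alpha+2\eta_3}{2\eta_1(\mu_2+\eta_3)}$ (equivalent to $m_2\eta_1<4\mu_2+3\eta_3$, hence implied by $(4)$) to conclude $2/\eta_1>\bar v_+$ rather than $2/\eta_1<\bar v_-$. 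This route is more elementary --- no manipulation of nested radicals --- and it makes the root-selection issue explicit, at the cost of the extra vertex check; both computations land on the same threshold $m_2\eta_1<4\mu_2+2\eta_3$. A second, smaller difference: you explicitly justify the hypothesis $s(\L_{0,\hat D})>0$ of Corollary~\ref{cor:transfer_stab_incr_domain} via the spectral decomposition \eqref{eq:Spectrum_of_general_L} applied with nondiffusive block $(\un,\us)$, giving $\sigma(\L_{0,\hat D})\supseteq\sigma(J_{12})$ at the constant steady state, whereas the paper leaves this containment implicit when citing the corollary.
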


\begin{proof}
From $\tr (J_{12}) = - \bar u_+ \bar v_+ (m_1 \bar v_+ + m_2 \bar u_+)$, it is clear that the sub-matrix $J_{12}$ has a negative trace. Thus, the $2 \times 2$ matrix $J_{12}$ has an eigenvalue with positive real part if and only if $\det J_{12}<0$. This occurs when the parameters satisfy
\begin{align}
    \det(J_{12}) 
    = m_1 m_2 \frac{\bar u_+ \bar v_+}{\big(1+\bar u_+ \bar v_+\big)^4} 
      \left( (\bar u_+ \bar v_+)^2 - 1 \right) < 0.
\end{align}

Clearly, this inequality holds whenever $\bar u_+ \bar v_+ < 1$.  By definition of $\bar u_+$, the inequality is satisfied when $\bar v_+ < 2 / \eta_1$. Substituting the explicit expression for $\bar v_+$, we obtain
\be
\label{eq:ineq}\alpha + \sqrt{\alpha^2 - 4 \mu_2 (\alpha + \eta_3)} <  2 \bigl( \eta_3 + 2 \mu_2 \bigr).\ee
Using the definition of $\alpha$, inequality \eqref{eq:ineq} simplifies to the desired condition
\mbox{$\eta_1 < 2 ( \frac{\eta_3}{m_2} + \frac{2}{\eta_2})$}.

If this inequality is satisfied, then $s(J_{12}) > 0$.  
By Corollary~\ref{cor:transfer_stab_incr_domain}, there exists either $D_v > 0$ sufficiently small, or a pair $(D_w, L)$ with $D_w > 0$ and $L \geqslant 1$ both sufficiently large, such that the steady state $\CSS_+$ exhibits DDI.
\end{proof}

Theorem~\ref{thm:J12unstable} guarantees the existence of DDI under broad parameter regimes.
The threshold values of $D_v,D_w$, and $L$ in Theorem~\ref{thm:J12unstable} can be characterized explicitly using the Routh--Hurwitz criterion applied to the characteristic polynomial of $J - \lambda_j D$ (Theorem~\ref{lem:DDI_toysys_exact}).
See Section~\ref{sec:special_case} for the notation.
For this system, it is straightforward to verify that the Routh--Hurwitz elements $p_1(\lambda_j)$, $p_2(\lambda_j)$, and $(p_1p_2 - p_3)(\lambda_j)$ are positive for all $\lambda_j \geqslant 0$ and $D_v, D_w > 0$. 
Instability can therefore arise only if $D_v$ and $D_w$ are chosen such that, for some fixed eigenvalue $\lambda_j$ the Routh--Hurwitz element $p_3(\lambda_j) := - \det \big(J - \lambda_j D\big)$ is negative.

\begin{theorem}
\label{lem:DDI_toysys_exact}
    Let $\Omega = (0, L)$ and consider the constant steady state $\CSS_+$ of~\eqref{eq:toysys}. This steady state exhibits DDI if either of the following conditions is satisfied:
    \begin{enumerate}
        \item[(I)] $L >0$, the diffusion coefficient $D_w > 0$ is fixed and 
        \begin{align} 
        0 < D_v < \varepsilon:=\sup_{j\in\NN}\dfrac{\det(J) - \det(J_{12}) \, D_w \lambda_j}{\lambda_j \lp \det(J_{13}) - J_1 \, D_w \lambda_j \rp}, 
        \end{align}
        \item[(II)] $D_v > 0$ fixed, and
        $$ L > \pi j \sqrt{\dfrac{J_1 \, D_v }{\det(J_{12})}}, \qquad D_w >  \dfrac{\det(J) - \det(J_{13})\,D_v \lambda_j}{\lambda_j(\det(J_{12}) - J_1\,  D_v \lambda_j)} > 0 $$
        holds for at least one $j \in \mathbb N$.
    \end{enumerate} 
\end{theorem}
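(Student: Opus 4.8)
The plan is to collapse the entire DDI question onto the sign of the single Routh--Hurwitz coefficient $p_3(\mu)=\det(\mu D - J)$ evaluated at the Neumann eigenvalues, and then to invert the resulting quadratic inequality for $D_v$ in case (I) and for $D_w$ in case (II). By Remark~\ref{rem:s(L)>0}, the steady state $\CSS_+$ exhibits DDI exactly when $s(J)<0$ and $s(-\lambda_j D + J)>0$ for some Neumann eigenvalue, which on $\Omega=(0,L)$ are $\lambda_j=(\pi j/L)^2$. As recorded just before the statement, the elements $p_1(\lambda_j)$, $p_2(\lambda_j)$ and $(p_1p_2-p_3)(\lambda_j)$ are all strictly positive for every $\lambda_j\geq 0$ and all $D_v,D_w>0$, so the Routh--Hurwitz criterion~\eqref{eq:Routh-Hurwitz_criterion} reduces $s(-\lambda_j D+J)>0$ to the condition $p_3(\lambda_j)<0$. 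The whole theorem therefore amounts to locating a Neumann eigenvalue at which $p_3$ is negative.

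First I would invoke the explicit quadratic expansion already derived in the $J_{23}$ subsection,
\begin{align*}
p_3(\mu) = -\det J + \big(\det J_{13}\,D_v + \det J_{12}\,D_w\big)\mu - J_1 D_v D_w\,\mu^2,
\end{align*}
and pin down the signs of its ingredients at $\CSS_+$: from $s(J)<0$ one gets $\det J<0$; stability of $J_{13}$ gives $\det J_{13}>0$; the diagonal entry satisfies $J_1<0$; and we work in the only admissible regime by Proposition~\ref{prop:Discussion_possibil_DDI_3_equs}, namely $J_{12}$ unstable, i.e.\ $\det J_{12}<0$ (trace negative). These four signs are what make both cases close.

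For case (I), with $D_w$ fixed, I would read $p_3(\lambda_j)<0$ as a linear inequality in $D_v$, collecting terms into $D_v\,\lambda_j(\det J_{13}-J_1 D_w\lambda_j) < \det J - \det J_{12}D_w\lambda_j$. The coefficient $\lambda_j(\det J_{13}-J_1 D_w\lambda_j)$ is strictly positive for $\lambda_j>0$ since $\det J_{13}>0$ and $-J_1 D_w\lambda_j>0$, so division preserves the inequality and yields precisely the $j$-indexed bound whose supremum is $\varepsilon$; then $0<D_v<\varepsilon$ furnishes, by definition of the supremum, some $j$ with $p_3(\lambda_j)<0$. For case (II), with $D_v$ fixed, I would instead solve for $D_w$, obtaining $D_w\,\lambda_j(\det J_{12}-J_1 D_v\lambda_j)<\det J-\det J_{13}D_v\lambda_j$. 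Here the coefficient $\det J_{12}-J_1 D_v\lambda_j$ changes sign at $\lambda_j=\det J_{12}/(J_1 D_v)>0$, and the domain condition $L>\pi j\sqrt{J_1 D_v/\det J_{12}}$ is exactly the statement $\lambda_j<\det J_{12}/(J_1 D_v)$, which places us where this coefficient is negative; dividing then reverses the inequality and produces the stated lower bound on $D_w$, the positivity requirement on that bound guaranteeing an admissible $D_w>0$.

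The arithmetic is elementary once these signs are fixed, so the genuinely delicate point---the step I would treat most carefully---is the bookkeeping of when the coefficients multiplying $D_v$ and $D_w$ are positive versus negative, since this governs whether division preserves or reverses the inequality; in case (II) it is exactly this sign reversal that forces the domain-enlargement condition on $L$. I would also verify that on $(0,L)$ the indexing $\lambda_j=(\pi j/L)^2$ makes the supremum in (I) finite, the ratio decaying like $\det J_{12}/(J_1\lambda_j)\to 0^+$ as $j\to\infty$, and that the ``at least one $j$'' quantifier in (II) is satisfied by first fixing $j$ and then enlarging $L$ and $D_w$.
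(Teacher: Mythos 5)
Your proposal is correct and follows essentially the same route as the paper's proof: reduce DDI to the sign condition $p_3(\lambda_j)<0$ (using that $p_1$, $p_2$, and $p_1p_2-p_3$ stay positive for this system), then solve the resulting quadratic-in-$\mu$ inequality linearly for $D_v$ in case (I) and for $D_w$ in case (II), with the sign change of the coefficient $\det J_{12}-J_1 D_v\lambda_j$ forcing the domain-enlargement condition on $L$. Your sign bookkeeping ($\det J<0$, $\det J_{13}>0$, $J_1<0$, $\det J_{12}<0$) and the verification that the supremum in (I) is positive match the paper's reasoning, just spelled out in more detail.
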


\begin{proof}
    To determine the precise parameter regimes for DDI, we analyze the sign of the Routh--Hurwitz element $p_3(\lambda_j) = - \det(J - \lambda_j D)$. Instability requires $p_3(\lambda_j)<0$, and we distinguish the cases of small $D_v$ and large $D_w$ accordingly.
    
    By solving $p_3(\lambda_j) < 0$ for $D_v$, that is, finding values of $D_v$ such that
    \begin{align}
    -\det(J) + \biggl( \det(J_{13})\,D_v + \det(J_{12}) \,D_w \biggr) \lambda_j + \biggl( -J_1\, D_v D_w \biggr) \lambda_j^2 < 0,
    \end{align}
    we obtain condition~\emph{(I)}.  
    The right-hand side of the resulting inequality is positive for sufficiently large $j \in \mathbb{N}$ and converges to $0$, since $J_{12}$ is unstable while $J_1$ and $J_{13}$ are always stable.

    The second condition is obtained by fixing $j \in \mathbb{N}$ and rewriting $p_3(\lambda_j) < 0$ as
    \begin{align}
    \label{eq:toy_critical_d2}
        \lambda_j \big( \det(J_{12}) - J_1\, D_v \lambda_j \big) D_w 
        < \det(J) - \det(J_{13})\,D_v \lambda_j.
    \end{align}
    Since the right-hand side of~\eqref{eq:toy_critical_d2} is negative, this inequality can only hold for large $D_w$ if
    \begin{align}
        \det(J_{12}) - J_1 D_v \lambda_j < 0.
    \end{align}
    The latter can always be achieved by increasing the domain length $L$ if necessary. 
    Solving~\eqref{eq:toy_critical_d2} for $D_w$ and the above inequality for $L$ yields the two expressions stated in condition~\emph{(II)}.
\end{proof}

\subsection{Numerical simulations}
We first verify that the parameter constraints introduced in Assumption~\ref{assump:Parameters} and Theorem~\ref{thm:J12unstable} define a non-empty region. These conditions guarantee the existence of the asymptotically stable steady state $\CSS_+$ and the occurrence of DDI through $s(J_{12}) > 0$.

Let $p := (m_1, m_2, m_3, \mu_1, \mu_2, \mu_3)$ denote the vector collecting parameters of system~\eqref{eq:toysys}, and $\mathcal P := [2, +\infty)^3 \times [0, 1]^3$ the admissible parameter domain. Within $\mathcal P$ we define
\begin{align*}
\mathcal R_1 &:= \biggl\{ p \in \Pi : \zeta > 0, \, \text{ where } \zeta := \alpha - 2\mu_2\biggr\},\\
\mathcal R_2 &:= \biggl\{ p \in \Pi : \Theta > 0, \text{ where } \Theta := \zeta^2 - 4\mu_2(\mu_2 + \eta_3) \biggr\},\\ 
\mathcal R_3 &:= \biggl\{ p \in \Pi : \alpha + 2\eta_3 + \sqrt{\Theta} > 2 \lp \dfrac{\mu_1}{\mu_3} - 1 \rp (\mu_2 + \eta_3)\dfrac{\eta_1 \eta_3}{m_1 + m_2} \biggr\},\\
\mathcal R_4 &:= \biggl\{ p \in \Pi : \eta_1 < 2 \left(\dfrac{\eta_3}{m_2} + \dfrac{2}{\eta_2}\right) \biggr\},
\end{align*}
A numerical exploration shows that 
\begin{align*}
    p_\star = (2.5, 9.68, 7.0, 0.95, 0.95, 0.6) \in \bigcap_{i=1}^4 \mathcal{R}_i,
\end{align*}
confirming that the feasible region is non-empty.
Since the inequalities defining $\mathcal R_1,\dots,\mathcal R_4$ are strict, an open neighborhood $\mathcal{V}_{p_\star} \subset \mathcal P$ exists in which all constraints remain satisfied (see Figure~\ref{fig:paramspace}).

\begin{figure}[ht]
    \centering
    \includegraphics[width=1.0\textwidth]{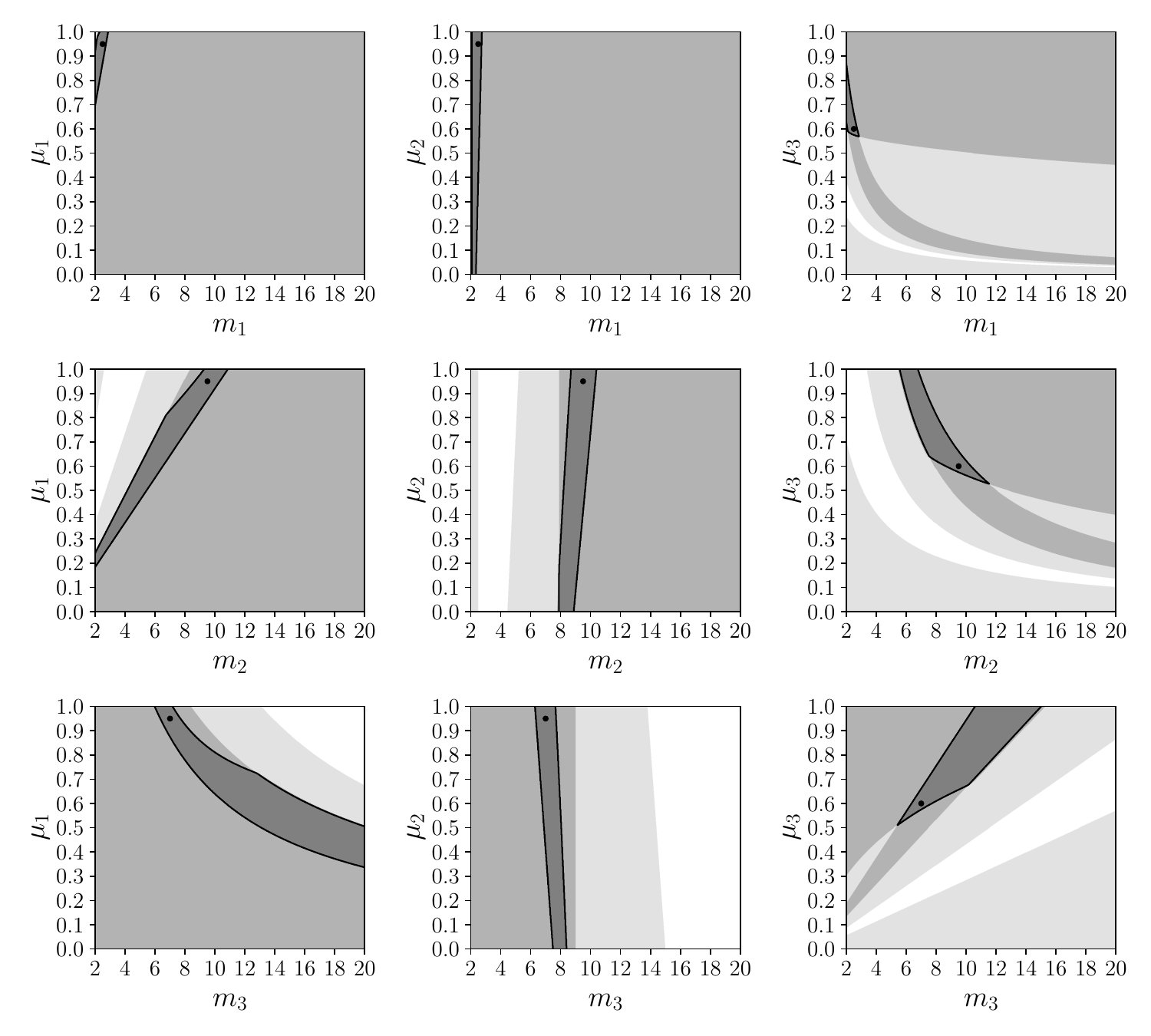}
    \caption{Cross sections of the six-dimensional parameter space, $\mathcal R$, in the $(m_i, \mu_j)$-planes, $i, j = 1, 2, 3$.
    In each panel, the regions $\mathcal R_1,\dots,\mathcal R_4$ are superimposed.
    The overlap, enclosed by a black curve, corresponds to parameters satisfying all constraints simultaneously. 
    The black dot marks the feasible point $p_\star$.}
    \label{fig:paramspace}
\end{figure}

The bifurcation structure of system~\eqref{eq:toysys} can be characterized using the Routh--Hurwitz polynomial $p_3$, whose roots determine stability boundaries in the diffusion parameter plane. 
The Turing-unstable region consists of all pairs of diffusion coefficients for which at least one spatial mode becomes unstable.
For each spatial eigenmode $\lambda_j$, define
$$\Gamma_j := \biggl\{ (D_v, D_w) \in (\RR_{>0})^2 \ : \ p_3(\lambda_j; D_v, D_w) < 0 \biggr\}, \qquad j\in\NN,$$ 
where the dependence of $p_3$ on $D_v$ and $D_w$ is made explicit.
Here, $\partial \Gamma_j$ denotes the associated stability curve $p_3(\lambda_j; D_v,D_w)=0$. Each $\Gamma_j$ consists of diffusion pairs destabilizing the $j$-th mode. If $(D_v,D_w) \in \Gamma_{i_1}\cap\dots\cap\Gamma_{i_k}$, the linearized operator at $\CSS_+$ has at least $k$ unstable eigenvalues. 
The full Turing-unstable set is therefore the union
\begin{align*}
    \Gamma := \bigcup_{j=1}^\infty \Gamma_j.
\end{align*}

Any diffusion pair $(D_v, D_w) \in \Gamma$ induces DDI of the steady state~$\CSS_+$. Figure~\ref{fig:bifurcation_diagram} illustrates the structure of $\Gamma$ and its constituent regions.
This description permits selective pattern control: choosing $(D_v,D_w)$ within a prescribed $\Gamma_j$ perturbs the $j$-th spatial mode, so that, with suitable initial conditions, patterns with a predictable number of peaks can be generated. Representative simulations on $\Omega=(0,1)$ are shown in Figure~\ref{fig:sim_patterns}.

\begin{figure}[ht]
    \centering    \includegraphics[width=1.0\textwidth]{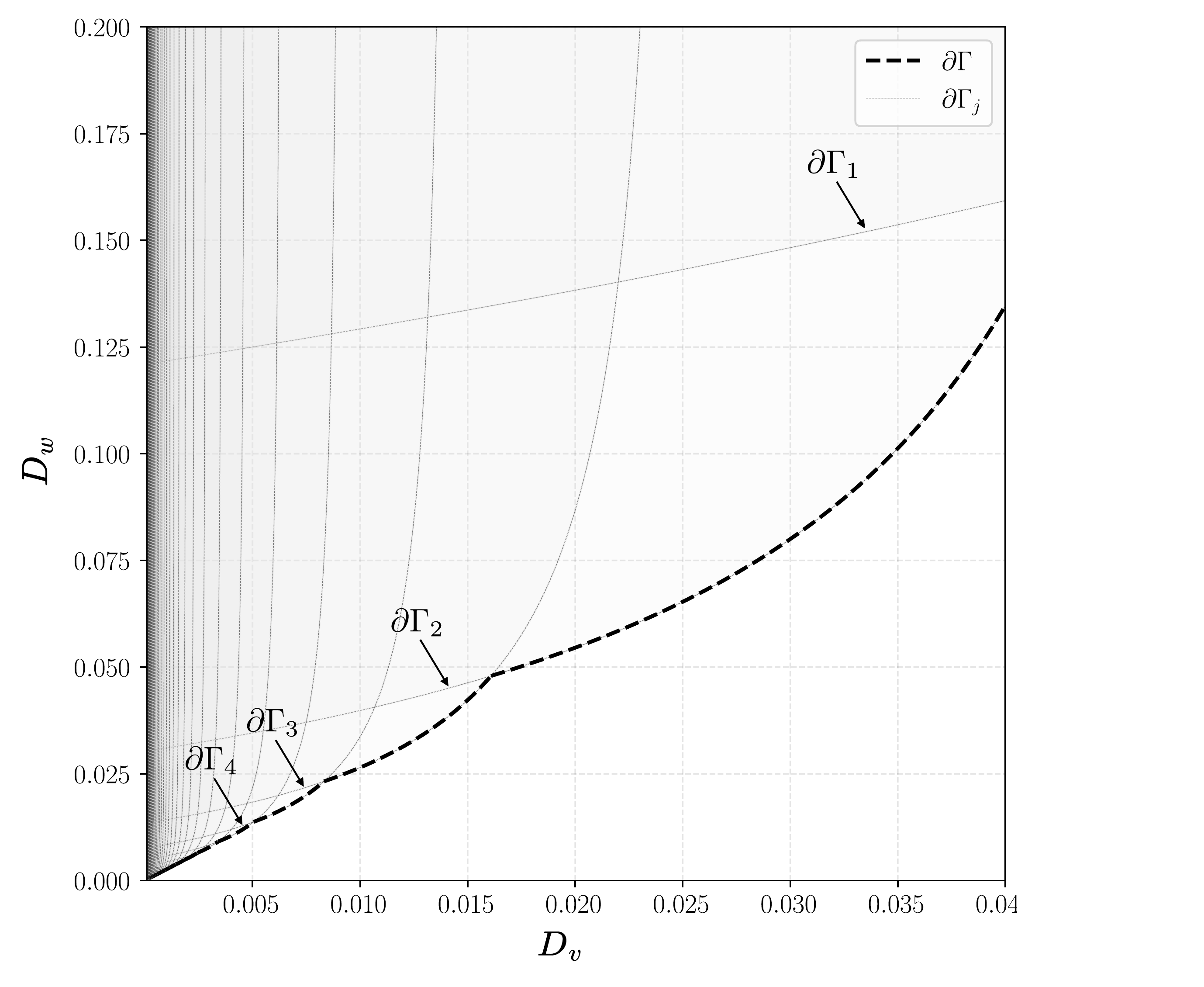}
    \caption{ Bifurcation diagram for system \eqref{eq:toysys} in the $(D_v, D_w)$-plane. The shaded region $\Gamma$ is the Turing unstable set, given by the union of all mode-specific regions $\Gamma_j$. \newline
    \textbf{Parameters}: $\mf=1.00$, $\ml=1.00$, $\me=0.60$, $m_1=2.50$, $m_2=9.68$, $m_3=7.00$.}
    \label{fig:bifurcation_diagram}
\end{figure}

\begin{figure}[ht]
\includegraphics[width=0.95\textwidth]{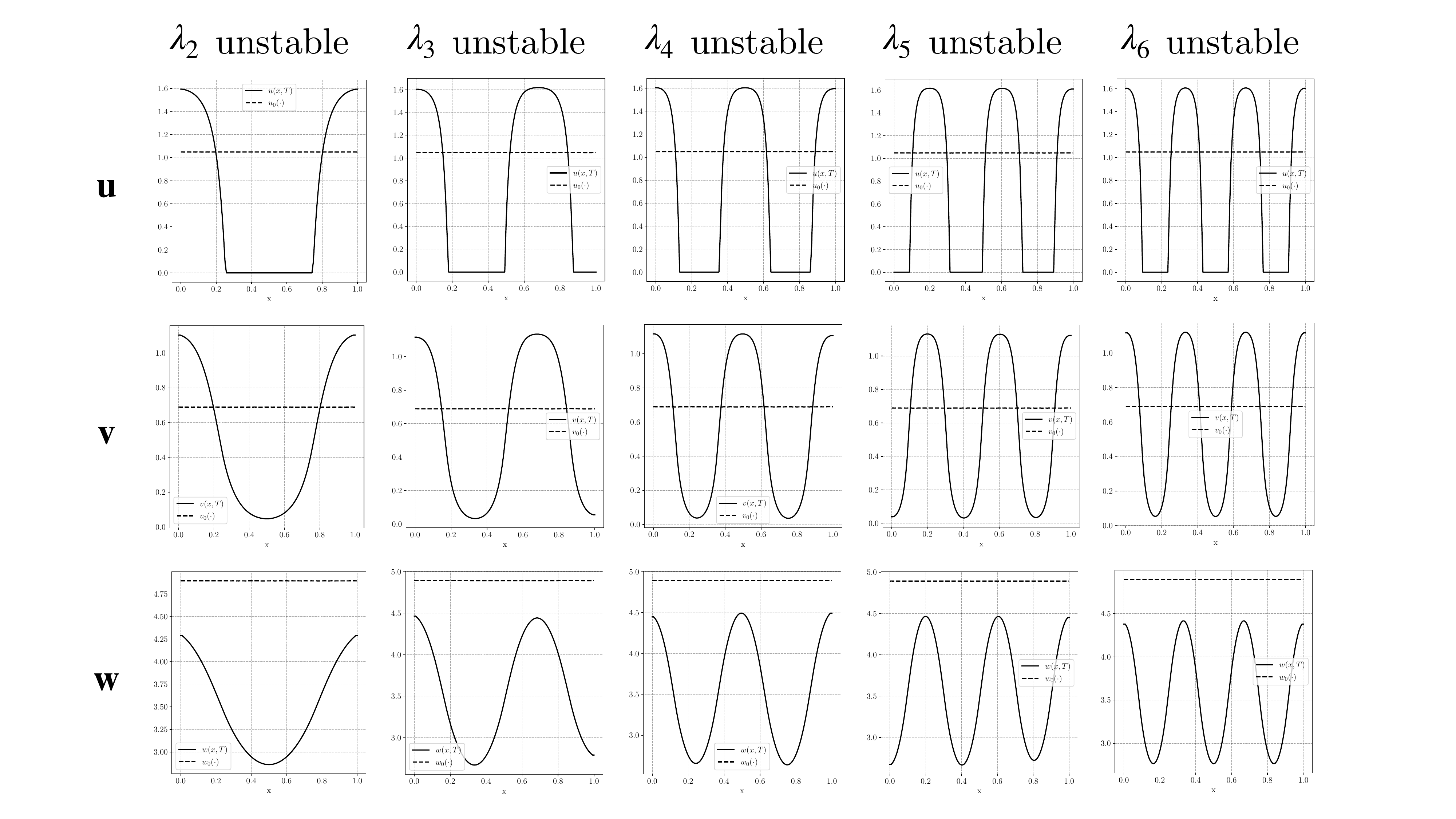}
    \caption{Patterned solutions to system \eqref{eq:toysys}.
    Each row corresponds to one component of $X=(u,v,w)$; each column shows the outcome for a different diffusion pair $(D_v,D_w)$. 
    In all cases, only a single spatial mode is unstable. \newline
    \textbf{Parameters}: $\mf=1.00$, $\ml=1.00$, $\me=0.60$, $m_1=2.50$, $m_2=9.68$, $m_3=7.00$. \newline
    \textbf{Init. Cond.}: $u_0 = \bar u + \xi$, \ \ $v_0 = \bar v + \xi$, \ \ $w_0 = \bar w + \xi$, \qquad $\xi = \frac{x}{10} \sin(10\pi x)$.} 
    \label{fig:sim_patterns}
\end{figure}

\subsection{Turing pattern vs.\ far-from-equilibrium patterns}

We distinguish between two fundamentally different types of spatial patterns. First, there are \textit{Turing patterns}, which
are
defined as regular structures that bifurcate from a constant steady state through DDI.
Their spatial scale is dictated by the unstable eigenmodes of the linearized operator.
In contrast, we observe \textit{far-from-equilibrium patterns}, {which} do not rely on DDI: they may exist as isolated solution branches, exhibit discontinuities or spikes, and form a continuum of stationary states depending on the nonlinear reaction terms.
These patterns do not exist in classical reaction--diffusion systems, but are intrinsic to mixed ODE--PDE dynamics.
Importantly, distinguishing between these pattern types can be challenging, as both types of patterns can evolve from a perturbation of a constant steady state exhibiting DDI.

The patterns observed in our simulations of system \eqref{eq:toysys} highlight the connection between classical Turing instability and far-from-equilibrium dynamics. To understand their nature, recall the nullcline for the nondiffusive component, $\{(u, v) \in \RR^2 : f (u, v) = 0 \}$, consists of two branches:
\begin{align*}
    u = 0 \quad \text{and}\quad u = \frac{m_1}{\mu_1} - \frac{1}{v},
\end{align*}
and the constant steady state $\bar X_+$ exhibiting DDI lies on the nontrivial branch. 
When perturbed by small random noise, the emerging patterns are initially governed by the unstable eigenmodes of the Laplacian: if only the $j$-th eigenmode is unstable, the spatial structure behaves similarly to $\cos(j\pi x/L)$ with the corresponding number of peaks, see Fig.~\ref{fig:sim_patterns}.

Surprisingly, nonlinear interactions allow stable patterns associated with higher modes that are \emph{linearly stable}. Intuitively, 
Figure~\ref{fig:different_perturbations} shows that, when $\lambda_4$ is the only unstable mode, large-amplitude perturbations aligned with $\cos(5\pi x)$ or $\cos(6\pi x)$ in the nondiffusive component still yield patterns reflecting those modes. 
This demonstrates that nonlinearity can stabilize patterns which otherwise would be beyond the reach of classical Turing instability,
provided the perturbation is strong enough to escape the basin of attraction of the unstable eigenmode.

\begin{figure}[!htbp]
\centering
\begin{subfigure}{0.9\linewidth}
    \centering
    \includegraphics[width=0.9\linewidth]{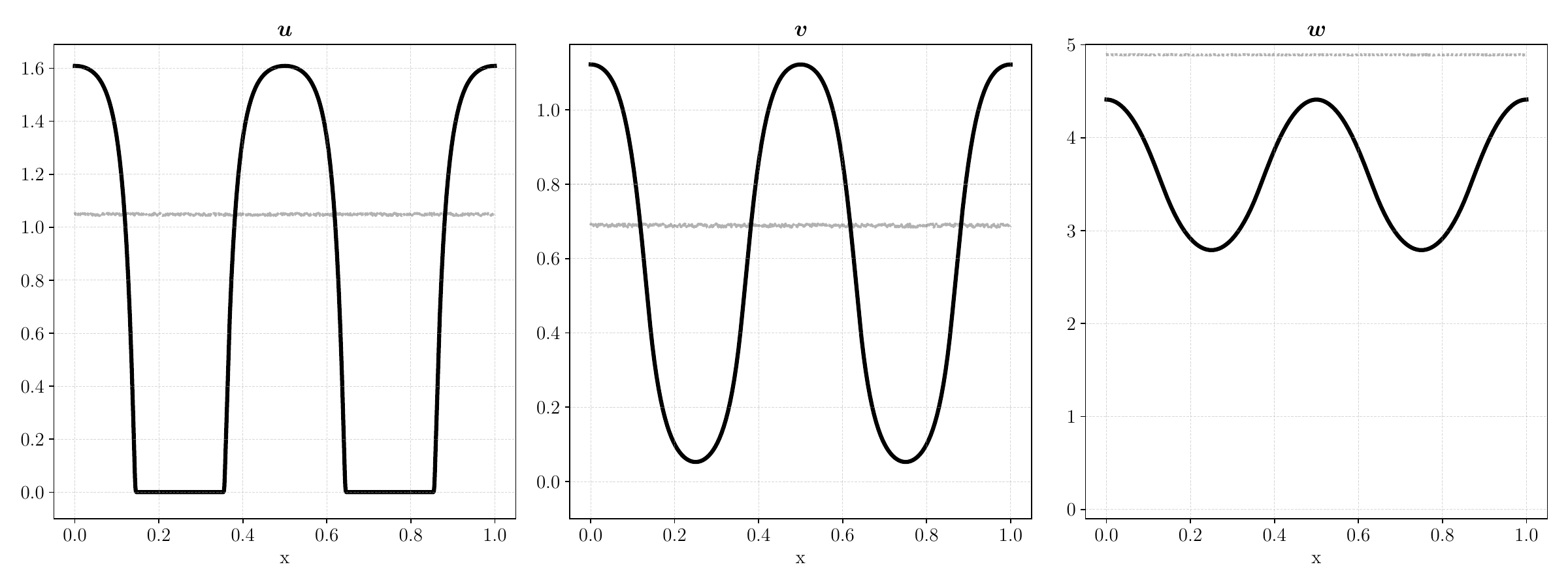}
    \caption{Small random perturbation around $\CSS$:
    \newline
    \textbf{Init. Cond.}: $u_0 = \bar u + \xi$, \ \ $v_0 = \bar v + \xi$, \ \ $w_0 = \bar w + \xi$, \qquad $\xi = 0.01 \cdot \mathcal U(-1, 1)$
    }
\end{subfigure} \\
\begin{subfigure}{0.9\linewidth}
    \centering
    \includegraphics[width=0.9\linewidth]{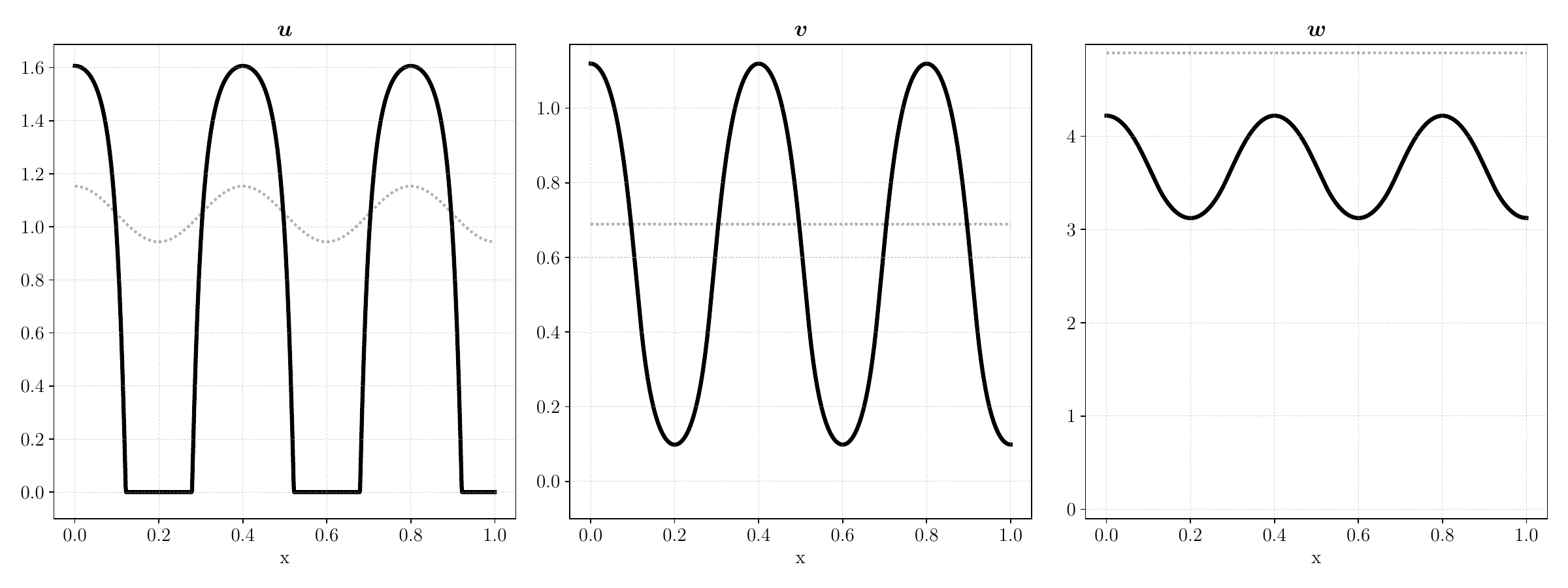}
    \caption{Perturbation describing eigenmode $\lambda_5$:
    \newline
    \textbf{Init. Cond.}: $u_0 = \bar u + 0.1 \cdot \bar u \cos(5 \pi x)$, \ \ $v_0 = \bar v + \xi$, \ \ $w_0 = \bar w + \xi$, \qquad $\xi = 0.01 \cdot \mathcal U(-1, 1)$
    }
\end{subfigure} \\
\begin{subfigure}{0.9\linewidth}
    \centering
    \includegraphics[width=0.9\linewidth]{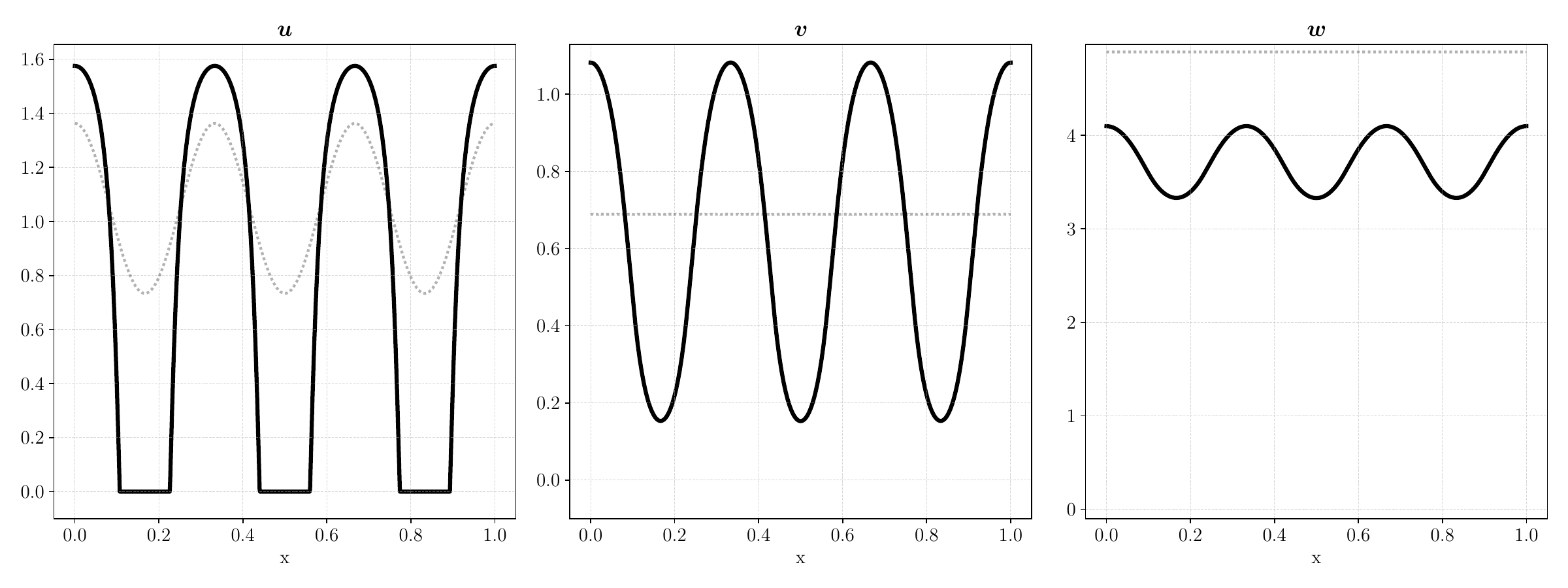}
    \caption{Perturbation describing eigenmode $\lambda_6$:
    \newline
    \textbf{Init. Cond.}: $u_0 = \bar u + 0.1 \cdot \bar u \cos(6 \pi x)$, \ \ $v_0 = \bar v + \xi$, \ \ $w_0 = \bar w + \xi$, \qquad $\xi = 0.01 \cdot \mathcal U(-1, 1)$
    }
\end{subfigure}
\caption{
Patterns from simulations with $D_v = 0.006$ and $D_w = 0.017$, 
for which only the eigenmode $\lambda_4$ is unstable. 
Different initial conditions select patterns associated with different eigenmodes. 
\newline
\textbf{Parameters}: $\mf=1.00$, $\ml=1.00$, $\me=0.60$, $m_1=2.50$, $m_2=9.68$, $m_3=7.00$.
}
\label{fig:different_perturbations}
\end{figure}

A main observation that distinguishes patterns presented in Figure \ref{fig:sim_patterns} and Figure \ref{fig:different_perturbations} from classical Turing patterns is that rather than stabilizing near the constant steady state, solutions drift toward the trivial branch $u\equiv0$ of the nullcline, where they settle into stable far-from-equilibrium patterns. 
This branch-switching mechanism explains both the non-smooth structure of the $u$ state variable in Fig.~\ref{fig:sim_patterns} (despite their Turing-like appearance), and the discontinuous stationary states constructed in Fig.~\ref{fig:towerpattern1}. 
Thus, while unstable eigenmodes originating from DDI act as a selection mechanism, the observed patterns are ultimately far-from-equilibrium states determined by the structure of the nonlinear branch.
We recreate this property by varying initial conditions, see Figure \ref{fig:different_perturbations} for comparison.

Theorem~\ref{DisExBan} provides the theoretical foundation: for any sufficiently small subset $\Omega_2\subset\Omega$, one can construct stationary solutions in which $u$ switches between the two nullcline branches.
These patterns do not require DDI for their existence, only the presence of multiple solution branches satisfying Assumptions \ref{ass:StatSol}--\ref{ass:DiscontinuousStationaryTwoBranches}.
This yields uncountably many distinct far-from-equilibrium patterns, in stark contrast to the discrete set of Turing modes in classical reaction--diffusion systems. 
The continuum of possibilities arises because $\Omega_2$ may be chosen arbitrarily, producing a rich family of spatial organizations not captured by bifurcation from a steady state.
In fact, it is possible to construct patterns that are arbitrarily close in the $L^{2}$-norm. Consequently, none of these patterns can be \emph{asymptotically} stable in $L^{2}$, since for every such pattern one can find an arbitrarily close initial condition that is itself a stationary solution. In contrast, the same patterns are not close in the $L^{\infty}$-norm, which allows for the possibility of asymptotic stability in $L^{\infty}$.

Figure~\ref{fig:towerpattern1} illustrates this mechanism explicitly. 
Starting from a continuous pattern obtained near DDI, we force $u$ to jump to the trivial branch on small subsets of the domain. 
The resulting stationary profiles exhibit genuine jump discontinuities in $u$, yet remain stable under time evolution. 
This shows that discontinuous patterns are not artifacts of numerics but inherent to the system’s dynamics.

\begin{figure}[ht!]
    \centering
    \begin{subfigure}{0.9\linewidth}
        \centering
        \includegraphics[width=0.9\linewidth]{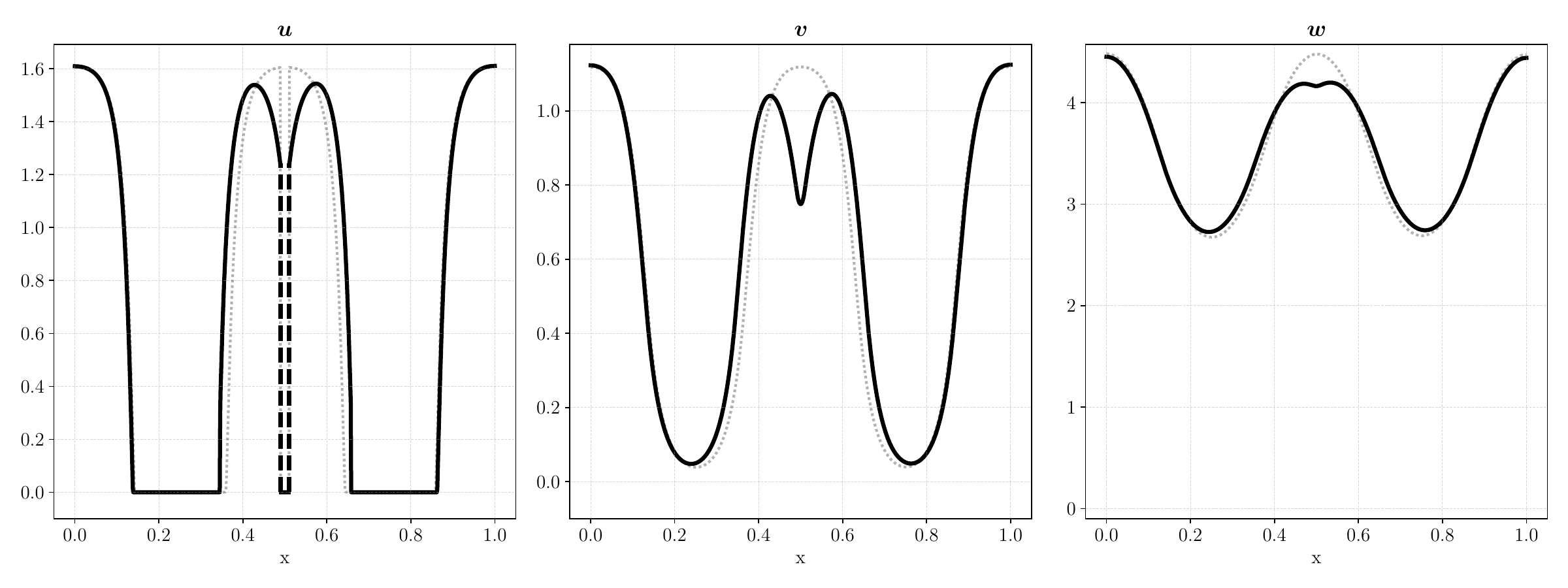}
        \caption{Initial profile forced to zero on $(0.49,0.51)$ produces two jump discontinuities at $x = 0.49$ and $x = 0.51$ (dashed black).
        }
    \end{subfigure}
    \begin{subfigure}{0.9\linewidth}
        \centering
        \includegraphics[width=0.9\linewidth]{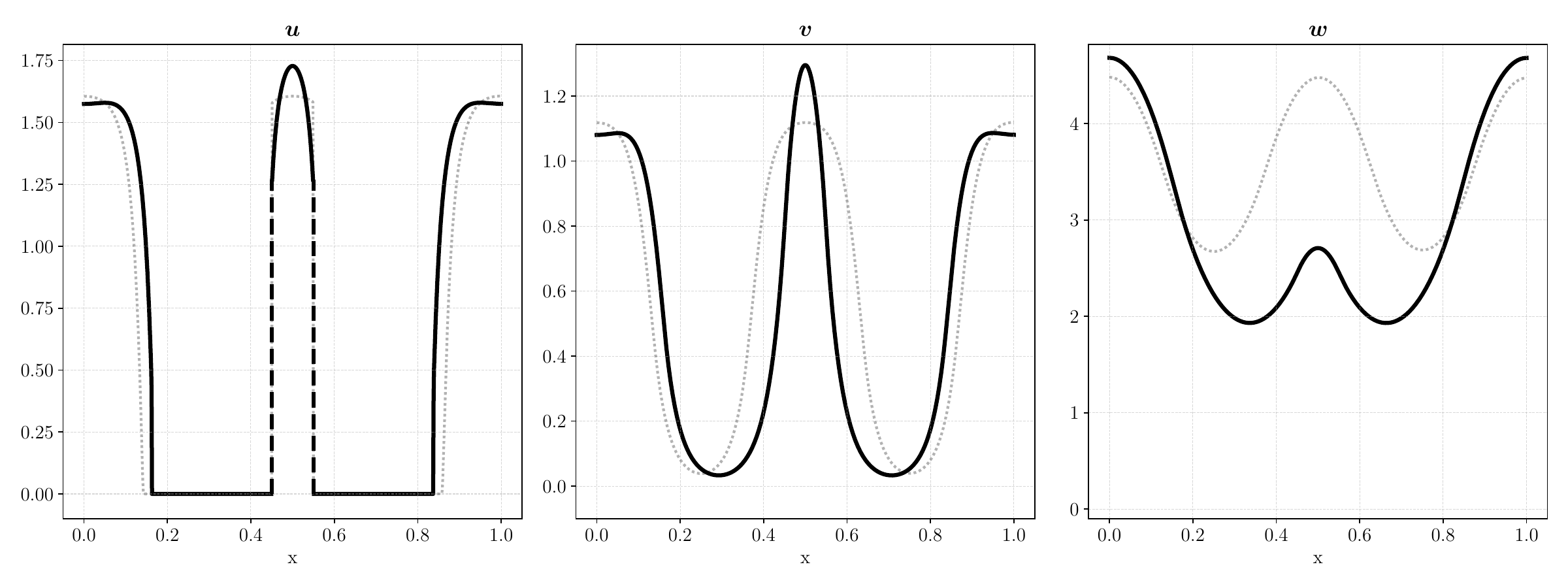}
        \caption{Forcing $u=0$ on two subintervals $(0.35, 0.45)$ and $(0.55, 0.65)$ produces jumps at $x = 0.45$ and $x = 0.55$ (dashed black).
        }
    \end{subfigure}
    \caption{
    Simulation of System~\eqref{eq:toysys} with initial condition (dotted gray) taken from the pattern in Figure~\ref{fig:sim_patterns} ($\lambda_4$ unstable) and modified by introducing an artificial jump to zero as indicated in the subfigures. The resulting stationary profile (black) exhibits jump discontinuities.
    \newline
    \textbf{Parameters}: $\mf=1.00$, $\ml=1.00$, $\me=0.60$, $m_1=2.50$, $m_2=9.68$, $m_3=7.00$.
    }
    \label{fig:towerpattern1}
\end{figure}

In the case of stable Turing patterns, one would expect that choosing diffusion coefficients closer to the threshold for DDI would yield patterns of smaller amplitude, as the system is near to the bifurcation point. In such a regime, the patterns should arise purely on the nontrivial branch of the nullcline $f=0$ not using any far-from-equilibrium dynamics. However, this behavior is never observed in our simulations.

In conclusion, although the patterns in system~\eqref{eq:toysys} may resemble classical Turing patterns, their origin is fundamentally different: unstable eigenmodes associated with DDI act only as a catalyst, guiding the solution toward far-from-equilibrium states created by branch switching in the nondiffusive component.
This hybrid mechanism enriches the pattern formation landscape and highlights phenomena that cannot occur in standard reaction--diffusion systems.

\section*{Acknowledgment}
{This work is supported by the German Research Foundation (DFG) under Germany’s Excellence Strategy EXC 2181/1 - 390900948 (the Heidelberg STRUCTURES Excellence Cluster) and through the Collaborative Research Center 1324 (SFB1324, project B6). Additional funding was provided by the European Research Council (ERC) under the Synergy Grant (PEPS, No. 10107178)}

\bibliographystyle{abbrv}
\bibliography{biblio.bib}

\appendix

\section{Proof of Theorem \ref{thm:bistabletoysys}}
\label{app1}

We now prove Theorem \ref{thm:bistabletoysys}. The argument is divided into three steps. 
First, we show that the system admits three steady states when condition \ref{item:sec_assumption} holds.  
Second, we establish that the two nontrivial steady states are positive under condition \ref{item:first_assumption}.  
Finally, we analyze the Jacobian to determine that $\CSS_+$ is locally asymptotically stable, while $\CSS_-$ is unstable, under condition \ref{item:third_assumption}.

\begin{proof} \textbf{Step I: Analysis of steady states.} 
It is clear $\CSS_0 := 0$ is a constant steady state. At equilibrium, the relation $\frac{\del \bar u}{\del t} = 0$ implies $\bar u = \eta_1 - \frac{1}{\bar v},$ and substituting the expression of $\bar u$ into $\frac{\del \bar w}{\del t} = 0$ gives
\mbox{$\bar w = \eta_3 (1 - \frac{1}{\eta_1 \bar v}).$}
Plugging both relations into $\frac{\del \bar v}{\del t} = 0$ yields a quadratic equation that $\bar v$ must obey:
\begin{align}\label{eq:steadystatev}
    \eta_1(\mu_2 + \eta_3) \bar v^2 - (\alpha + 2\eta_3) \bar v + m_2 = 0.
\end{align}
The discriminant
\begin{equation}\label{eq:existssteadystate}
(\alpha + 2\eta_3)^2 - 4 m_2\eta_1 (\mu_2 + \eta_3) = (\alpha-2\mu_2)^2 - 4 \mu_2 (\mu_2 + \eta_3)
\end{equation}
is positive thanks to  assumption \ref{item:sec_assumption}.
Thus, equation \eqref{eq:steadystatev} has two distinct real solutions and overall, system \eqref{eq:toysys} admits three constant steady states: the trivial equilibrium $\CSS_0$,  and two nontrivial steady states $\CSS_\pm = (\eta_1 - \frac{1}{\bar v_\pm}, \bar v_\pm, \eta_3  - \frac{\eta_3}{\eta_1 \bar v_\pm}),$ where
\begin{align*}
\bar v_\pm := \dfrac{\alpha + 2\eta_3 \pm  \sqrt{(\alpha-2\mu_2)^2 - 4 \mu_2 (\mu_2 + \eta_3)}}{2 \eta_1 (\mu_2 + \eta_3)}.
\end{align*}

\vspace{1em}
\noindent
\textbf{Step II: Positivity of the nontrivial steady states.}
It is straightforward to verify $\bar v_\pm > 0$ for all admissible parameters.
From $\bar u_\pm = \eta_1 - 1/\bar v_\pm$ and $\bar w_\pm = \eta_3 (1 - 1/(\eta_1 \bar v_\pm))$, both $\bar u_\pm,$ and $\bar w_\pm$ are positive, if $\eta_1 \bar v_\pm > 1$.
Substituting the expression for $\bar v_\pm$ gives the inequality
\be\label{eq:condition_positive_+s}\mp \sqrt{(\alpha-2\mu_2)^2 - 4\mu_2(\mu_2 + \eta_3)} < \alpha - 2\mu_2.\ee
Recall that condition \ref{item:first_assumption} ensures $\alpha - 2\mu_2 > 0$. 
In the "$-$" case, inequality \eqref{eq:condition_positive_+s} holds automatically since the left-hand side is negative and the right-hand side is positive.
In the "$+$" case, both sides of equation \eqref{eq:condition_positive_+s} are positive, and the inequality \eqref{eq:condition_positive_+s} holds again. Therefore, both $\CSS_\pm$ are positive.

\vspace{1em}
\noindent
\textbf{Step III: Stability analysis.}
We investigate conditions under which the system is bistable. The Jacobian evaluated at an arbitrary equilibrium $\CSS = (\bar u,\bar v,\bar w)$ is
$$ \hspace{-2em} J(\CSS) = \dfrac{1}{(1+ \bar u \bar v)^2}
\bpm
-\mf (1+ \bar u \bar v)^2 + m_1 \bar v & m_1 \bar u & 0 \\
m_2 \bar v & -(\mf + \bar w) (1+ \bar u \bar v)^2 + m_2 \bar u & -\bar v (1+ \bar u \bar v)^2 \\
m_3 \bar v & m_3 \bar u & -\mu_3 (1+ \bar u \bar v)^2
\epm.$$
First, we observe $J(\CSS_0) = -\diag(\mf, \ml, \me),$ which is clearly stable.
However, more work is needed to conclude about the stability of $J$ around $\CSS_\pm$. Exploiting relations on the steady-states, we obtain
$$ J(\CSS_\pm) = \dfrac{1}{(1 + \bar u_\pm \bar v_\pm)^2} \bpm -m_1 \bar u_\pm \bar v_\pm^2 & m_1 \bar u_\pm  & 0 \\ m_2 \bar v_\pm & -m_2 \bar u_\pm ^2 \bar v_\pm & -\bar v_\pm (1 + \bar u_\pm \bar v_\pm)^2 \\ m_3 \bar v_\pm & m_3 \bar u_\pm  & -\me (1 + \bar u_\pm \bar v_\pm)^2 \epm.$$
Our next step is to use the Routh--Hurwitz criterion to show that $J$ is unstable around $\bar X_-$, and stable around $\bar X_+$. 

Computing $\det(J(\CSS_\pm))$ yields
\begin{align*}
\det(J(\CSS_\pm)) = \dfrac{1}{(1 + \bar u_\pm \bar v_\pm)^6} \biggl[ \bar v_\pm & (1 + \bar u_\pm \bar v_\pm)^2 \biggl(-m_1m_3 \bar u_\pm^2 \bar v_\pm^2 - m_1 m_3 \bar u_\pm \bar v_\pm \biggr) \\ & - \mu_3 (1 + \bar u_\pm \bar v_\pm)^2 \biggl( m_1 m_2 \bar u_\pm^3 \bar v_\pm^3 - m_1 m_2 \bar u_\pm \bar v_\pm \biggr) \biggr].
\end{align*}
Using the definition of $\bar u_\pm$, we establish $\bar u_\pm \bar v_\pm - 1 = \eta_1 \bar v_\pm - 2$. 
Thus,
\begin{align}
\label{eq:detJ}
\det(J(\CSS_\pm)) = -m_1 \mu_3 \dfrac{\bar u_\pm \bar v_\pm}{(1 + \bar u_\pm \bar v_\pm)^3} \biggl[\bigl(\alpha + 2 \eta_3 \bigr) \bar v_\pm  - 2 m_2\biggr].
\end{align}
For convenience, we introduce the notation
\begin{align*}
&\beta := \alpha + 2\eta_3 > 0, & &\gamma := 4\mu_2 (\alpha + \eta_3) > 0, & &\delta := 4 \eta_3 (\alpha + \eta_3) > 0.
\end{align*} 
Substituting the expression of $\bar v_\pm$ 
into \eqref{eq:detJ} yields
$$-\det(J(\CSS_\pm)) = m_1 \me \dfrac{\bar u_- \bar v_-}{(1+\bar u_- \bar v_-)^3} \lp \dfrac{\beta}{2\eta_1 (\ml + \eta_3)} \lp \beta \pm \sqrt{\alpha^2 - \gamma} \rp - 2 m_2 \rp.$$
We stress that the square roots have positive argument, since $\alpha^2 - \gamma > 0$ by assumption \ref{item:sec_assumption}.
Factoring the $2\eta_1(\ml + \eta_3)$ term out, we obtain
$$- \det(J(\CSS_\pm)) = \dfrac{\mf \me}{2 (\ml + \eta_3) } \dfrac{\bar u_- \bar v_-}{(1+\bar u_- \bar v_-)^3} \lp \beta^2 - \delta - \gamma \pm \beta \sqrt{\alpha^2 - \gamma} \rp.$$
Using $\beta^2 - \delta = \alpha^2$, we see that $-\det(J(\CSS_\pm))$ has the same sign as
$$\mathcal A := \alpha^2 - \gamma \pm \beta \sqrt{\alpha^2 - \gamma} = \sqrt{\alpha^2 - \gamma} \lp \sqrt{\alpha^2 - \gamma} \pm \beta \rp.$$
We deal with the case "$-$" and "$+$" separately. In the "$-$" case, it holds
$$\mathcal A = \dfrac{\sqrt{\alpha^2 - \gamma}}{\sqrt{\alpha^2-\gamma} + \beta} \lp \alpha^2 - \beta^2 - \gamma \rp = - \dfrac{\sqrt{\alpha^2 - \gamma}}{\sqrt{\alpha^2-\gamma} + \beta} \lp \delta + \gamma \rp < 0,$$
so $-\det(J(\CSS_-)) < 0$ and the Routh--Hurwitz criterion enables us to conclude that $\CSS_-$ is unstable. On the other hand, in the "$+$" case, $\mathcal A > 0$ follows from \eqref{eq:existssteadystate}. Thus $-\det(J(\CSS_+)) > 0$.
Using the Routh--Hurwitz criterion once again, the steady state $\CSS_+$ is stable if
\be\label{cond:DDI}-\tr \bigl( J(\CSS_+) \bigr) \biggl(\sum_{1 \le i < j \le 3} \det \bigl( J(\CSS_+)_{ij} \bigr) \biggr) + \det \bigl( J(\CSS_+) \bigr) > 0.\ee
This last expression is rather involved, so we break it down into smaller parts. Using the positivity of all terms in $-\tr(J(\CSS_+))$, we obtain the (coarse) estimate $-\tr(J(\CSS_+)) > \mu_3$.
Next, by distributing $\frac{m_1}{(1 + \bar u_+ \bar v_+)}$ inside the parenthesis in equation \eqref{eq:detJ}, the coefficient $-\det(J(\CSS_+))$ is rewritten as
$$-\det(J(\CSS_+)) = \mu_3 \dfrac{u_+ v_+}{(1 + \bar u_+ \bar v_+)^2} \lp \dfrac{m_1 \eta_3 \bar v_+}{1 + \bar u_+ \bar v_+} - m_1 m_2 \dfrac{1- \bar u_+ \bar v_+}{1 + \bar u_+ \bar v_+}\rp.$$
Finally, a direct computation shows
$$\sum_{1 \le i < j \le 3} \det \bigl( J(\CSS_+)_{ij} \bigr) = \mu_3 \dfrac{u_+ v_+}{(1 + \bar u_+ \bar v_+)^2} \lp m_1 \bar v_+ + m_2 \bar u_+ + \eta_3 - \dfrac{m_1 m_2}{\mu_3} \dfrac{1- \bar u_+ \bar v_+}{1 + \bar u_+ \bar v_+} \rp.$$
Putting everything together, inequality \eqref{cond:DDI} holds if
$$m_1 \bar v_+ + m_2 \bar u_+ + \eta_3 > \dfrac{m_1 \eta_3}{(1 + \bar u_+ \bar v_+) \mu_3} \bar v_+.$$
Using the equality $1 + \bar u_+ \bar v_+ = \eta_1 \bar v_+$ with the estimate $\bar u_+ > \bar v_+$, we rewrite the last inequality as
\be\hspace{-3em} \bar v_+ > \lp \dfrac{\mu_1}{\mu_3} - 1 \rp\dfrac{\eta_3}{m_1 + m_2}.\ee
This inequality holds thanks to assumption \ref{item:third_assumption}.
Hence, $\CSS_+$ is locally asymptotically stable. Therefore, the kinetic system is bistable under our assumptions and the proof is completed.
\end{proof}

\section{Existence of a nonnegative, bounded, global solution to System (\ref{eq:toysys})}
\label{app2}

In this section, we show that the system \eqref{eq:toysys} has a global solution, which is nonnegative and bounded, using a bounded invariant region argument. Kowall et al.~\cite[Proposition C.2]{KMMü23} recently established that RD--ODE systems of the form \eqref{eq:toysys} admit a unique local-in-time mild solution.
We denote such a solution by $X = (u,v,w)$.

Our goal here is to extend this local solution to all times, by proving that solutions to \eqref{eq:toysys} remain \textit{a priori} bounded for all time.
This is achieved using the theory of bounded invariant regions (see ~\cite[Chap. 14, \S B]{Smoller1983}). We first recall the definition of an invariant region.

\begin{definition}[Invariant Region, \cite{Smoller1983}] A closed subset $\Sigma \subset \RR^3$ is called a (positively) invariant region for the initial value problem \eqref{eq:toysys} if any solution $X(x,t)$ with initial values in $\Sigma$ satisfies $X(x,t) \in \Sigma$ for all $x \in \Omega$ and for all $t \in [0, T_{\max}]$.
\end{definition}

It is well known that rectangular regions are particularly convenient for reaction--diffusion systems.
By \cite[Corollary 14.8, Theorem 14.11]{Smoller1983}, the hyper-rectangle 
$$\Sigma := \biggl\{ X = (u, v, w) \in \RR^3 \ : \ a_u \le u \le b_u, \quad a_v \le v \le b_v, \quad a_w \le w \le b_w \biggr\}$$
is invariant provided the nonlinear mapping $F$ points strictly into $\Sigma$ on $\del\Sigma$. 
If in addition the system \eqref{eq:toysys} has the $f-$stable property (see \cite[Definition 14.10]{Smoller1983}), it is sufficient that $F$ points non-strictly into $\Sigma$ on the boundary. 

The $f-$stability of system $\eqref{eq:toysys}$ follows by adapting the proof in \cite[Theorem 3]{Daub2013Nov}.
Thus, it is only necessary to verify that $F$ points inward on the boundary of $\Sigma$.

\begin{proof}
    Let $A_u, A_v, A_w > 0$ denote arbitrary constants, and define the rectangular region
    \begin{align*}
        \Sigma := \bigcap_{i=1}^3[0, A_i] \subset \RR^3.
    \end{align*}
    We introduce the six edge functions
    \begin{align*}
        G_1(u, v, w) &= -u, & G_2(u, v, w)&= -v, & G_3(u, v, w) &= -w, \\[0.7em]
        G_4(u, v, w) &= u - A_1, & G_5(u, v, w)&= v - A_2, & G_6(u, v, w) &= w - A_3,
    \end{align*}
    so that $\Sigma = \bigcap_{j=1}^6\{ G_j \le 0\}$.
    By definition, for any boundary point $(u_\star, v_\star, w_\star) \in \del\Sigma$, there exists a $j$ such that $G_{j}(u_\star,v_\star,w_\star) = 0$.
    The condition that the nonlinear mapping $F$ points into $\Sigma$ is 
    $$\grad G_{j}(u_\star, v_\star, w_\star) \cdot F(u_\star, v_\star, w_\star) \le 0.$$
    
    If $u_\star = 0$, $v_\star = 0$, or $w_\star = 0$, then $f(u_\star, v_\star, w_\star) = 0$, $g(u_\star, v_\star, w_\star) = 0$, or $h(u_\star, v_\star, w_\star) = 0$ respectively and the condition is satisfied. This already shows that solutions remain nonnegative. On the other hand, if $u_\star = A_1$, or $v_\star = A_2$ or $w_\star = A_3$, then $\grad G_{j} = 1$.
    Using the positivity of $u,v,w$ and the bound $\frac{uv}{1+uv} \le 1$, the following estimates follow
    \begin{align*}
    \hspace{-1em} f(A_1, v, w) \le m_1 - \mu_1 A_1, \quad 
    g(u, A_2, w) \le m_2 - \mu_2 A_2, \quad 
    h(u, v, A_3) \le m_3 - \mu_3 A_3.
    \end{align*}
    Therefore, by choosing $A_i \ge \eta_i$ for $i = 1, 2, 3$, it holds
    \begin{gather*}
    \hspace{-2em}\grad G_{4}(A_1, v, w) \cdot F(A_1, v, w) \le 0, \quad \grad G_{5}(u, A_2, w) \cdot F(u, A_2, w) \le 0,\\ \grad G_{6}(u, v, A_3) \cdot F(u, v, A_3) \le 0.
    \end{gather*}
    Hence $\Sigma$ is a bounded invariant region for system \eqref{eq:toysys}:
    any solution starting inside $\Sigma$ remains inside $\Sigma$ for all times. So the solution is also bounded, hence global. The proof is complete.
\end{proof}



\end{document}